\title[]{Quotients of spectra of almost factorial domains and Mori dream spaces}
\author{Marcel Maslovari\'c*} 
\thanks{*Supported by the DFG Research Training Group 1493 "Mathematical Structures in Modern Quantum Physics"}
\keywords{Mori dream space, VGIT, quiver moduli}
\date{\today}
\address{Marcel Maslovari\'c,
Mathematisches Institut,
Georg-August-Universit\"at G\"ottingen,
Bunsenstra\ss e 3-5, 
D-37073 G\"ottingen,
Germany}
\email{mmaslov@uni-goettingen.de}
\newcommand{\N}{{\mathbb{N}}}
\newcommand{\Z}{{\mathbb{Z}}}
\newcommand{\Q}{{\mathbb{Q}}}
\newcommand{\R}{{\mathbb{R}}}
\newcommand{\Ocal}{{\mathcal{O}}}
\newcommand{\G}{{\mathbb{G}}}
\newcommand{\A}{{\mathbb{A}}}
\renewcommand{\P}{{\mathbb{P}}}
\newcommand{\C}{{\mathcal{C}}}
\newcommand{\M}{{\mathcal{M}}}
\newcommand{\Zcal}{{\mathcal{Z}}}
\newcommand{\F}{{\mathbb{F}}}
\newcommand{\Eff}{{\overline{\mathrm{Eff}}}}
\newcommand{\Mov}{{\overline{\mathrm{Mov}}}}
\newcommand{\Nef}{{\mathrm{Nef}}}
\newcommand{\lcm}{{\mathrm{lcm}}}
\newcommand{\ord}{{\mathrm{ord}}}
\newcommand{\Mat}{{\mathrm{Mat}}}
\newcommand{\GL}{{\mathrm{GL}}}
\newcommand{\p}{{\mathfrak{p}}}
\newcommand{\eps}{{\varepsilon}}
\newtheorem{prop}{Proposition}[section]
\newtheorem{lemma}[prop]{Lemma}
\newtheorem{cor}[prop]{Corollary}
\newtheorem{thm}[prop]{Theorem}
\theoremstyle{definition}
\newtheorem{defin}[prop]{Definition}
\newtheorem{rem}[prop]{Remark}
\newtheorem{conv}[prop]{Convention}
\newtheorem{example}[prop]{Example}
\begin{document}
 
\begin{abstract}
We prove that a GIT chamber quotient of an affine variety $X=Spec(A)$ by a reductive group $G$, where $A$ is an almost factorial domain, is a Mori dream space if it is projective, regardless of the codimension of the unstable locus. This includes an explicit description of the Picard number, the pseudoeffective cone, and the Mori chambers in terms of GIT.\\
We apply the results to quiver moduli, to show that they are Mori dream spaces if the quiver contains no oriented cycles, and if stability and semistability coincide. We give a formula for the Picard number in quiver terms. As a second application we prove that geometric quotients of Mori dream spaces are Mori dream spaces as well, which again includes a description of the Picard number and the Mori chambers.\\
Some examples are given to illustrate the results.
\end{abstract}

\maketitle

\tableofcontents

\section{Introduction}

It has been widely observed that variation of Geometric Invariant Theory (VGIT) and birational geometry are closely related. In \cite{hukeel}, Hu and Keel defined Mori dream spaces as a very particular class of varieties where this relation is very close, and where the Mori program works very well. Under some nice properties (see \cite{hukeel} Lemma 2.2 for the precise requirements) they proved that a quotient of an affine variety (over some algebraically closed field $k$) with respect to a GIT chamber is a Mori dream space, that the Picard group of the quotient is identified with the character group of the acting group, and that GIT chambers are identified with equivalence classes of divisors on the quotient, the so called Mori chambers (consider Theorem 2.3 in \cite{hukeel}).\\
One of the properties they required is that the unstable locus be of codimension at least two, which seems to be a technical hurdle in the application of this theorem. In the present paper we generalize the result by Hu and Keel to arbitrary GIT chambers, regardless of the codimension of the unstable locus.\\

Our first result is the following. The required definitions are given in sections \ref{S:quotientafd} and \ref{S:preliminaries}, and the proof is given in section \ref{S:quotientafd} as well. We recall that $E(X)$ is the group of invertible regular functions on $X$ modulo scalars, and that for $U\subset X$ the set $\Zcal(U)$ consists of the irreducible components of $X\setminus U$ which are of codimension one. If $U$ is invariant under the action of a group $G$ on $X$, then there is an induced action of $G$ on $\Zcal(U)$.

\begin{thm}\label{T:quotientafd_intro}
Let a reductive group $G$ act on a variety $X=Spec(A)$, where $A$ is an almost factorial domain such that $A^G=k$. Suppose that $U\subset X$ is the stable locus of a character $\chi_0\in\chi(G)$ such that stability and semistability coincide, and denote by $q:U\to Y$ the associated geometric quotient.
\begin{enumerate}
\item All rational contractions of $Y$ are induced by GIT, and the variety $Y$ is a Mori dream space.
\item The canonical map
$$\psi:\chi(G)_\Q\to Pic(Y)_\Q$$
is surjective with kernel of rank $rk(E(X)^G)+|\Zcal(U) /G|$.
\item The pseudoeffective cone of $Y$ is precisely the image of all stable $G$-ample classes in $\chi(G)_\Q$.
\item Via $\psi$ the Mori chambers of $Y$ and the stable GIT chambers in $\chi(G)_\Q$ are identified.
\end{enumerate}
\end{thm}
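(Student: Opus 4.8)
The plan is to reduce every assertion to $G$-equivariant divisor theory on the open set $U$ and then feed the resulting cone data into the Hu--Keel characterization of Mori dream spaces. Since stability and semistability coincide, $q\colon U\to Y$ is a geometric quotient with finite stabilizers, so descent identifies $Pic(Y)_\Q$ with the rationalized equivariant Picard group $Pic^G(U)_\Q$, and more generally identifies divisor classes on $Y$ with $G$-invariant divisor classes on $U$. This is the mechanism through which all four statements get phrased in terms of $\chi(G)$.

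For statement (2) I would first exploit almost factoriality: the class group $Cl(X)$ is torsion, hence so is $Cl(U)$ (a quotient of $Cl(X)$ by the classes of the removed divisors), so that $Pic(U)_\Q=0$. Feeding this into the equivariant sequence
$$\Ocal(U)^*_G \xrightarrow{\; w\;} \chi(G)\to Pic^G(U)\to Pic(U),$$
where $w$ sends a semiinvariant unit to its weight, shows at once that $\psi$ is surjective and that $\ker\psi=(\operatorname{im} w)_\Q$. It then remains to compute $rk(\operatorname{im} w)$. I would split the semiinvariant units on $U$ by their divisor, which is supported on $\Zcal(U)$: the divisor map $\Ocal(U)^*_G\to\bigoplus_{Z\in\Zcal(U)}\Z$ has image the $G$-invariant part, which is rationally all of $\bigoplus_{[Z]\in\Zcal(U)/G}\Q$, because each orbit-sum of divisors is torsion in $Cl(X)$ and (for connected $G$; the reductive case follows by taking a power) admits a semiinvariant defining function. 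Its kernel consists of the semiinvariant units on $X$, whose image in $E(X)$ is exactly the fixed part $E(X)^G$. The decisive input is projectivity of $Y$: an invariant unit on $U$ descends to a global unit on the projective variety $Y$, hence is constant, so the weight-zero contribution vanishes and $rk(\ker\psi)=rk(E(X)^G)+|\Zcal(U)/G|$.

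For statements (3) and (4) I would set up the VGIT dictionary: $H^0(Y,L_\chi)$ is computed by the weight-$\chi$ semiinvariants in $A$, so $\psi(\chi)$ is effective exactly when $\chi$ is $G$-effective, and a class lies in $\Eff(Y)$ precisely when it is the image of a character admitting a nonempty (stable) semistable locus; this identifies the pseudoeffective cone with the image of the stable $G$-ample classes. For the chamber structure I would show, following Hu and Keel, that as $\chi$ ranges over a fixed GIT chamber the induced birational map $Y=Y_{\chi_0}\dashrightarrow Y_\chi$ is a rational contraction with a fixed model, that crossing a wall changes the model by a flip or a divisorial contraction, and that every rational contraction of $Y$ arises this way; matching models to chambers under $\psi$ then identifies the Mori chambers with the stable GIT chambers. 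Finiteness of the chamber decomposition, together with the polyhedrality of the $G$-ample cone and the projectivity of each GIT quotient, yields via the Hu--Keel criterion that $Y$ is a Mori dream space, proving (1).

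The main obstacle is precisely the feature distinguishing this theorem from Hu--Keel, namely the divisorial components $\Zcal(U)$ of the unstable locus. When the unstable locus has codimension at least two these play no role, but here they enter twice: they enlarge $\ker\psi$ by the term $|\Zcal(U)/G|$, and, more seriously, they mean that semiinvariant sections may vanish along honest divisors of $Y$, so the identification of $\Eff(Y)$ and of the Mori chambers must be carried out while carefully tracking these boundary divisors and how they are contracted or flipped under wall-crossing. Establishing that the chamber-to-model correspondence remains a bijection in the presence of divisorial unstable loci -- equivalently, that no rational contraction of $Y$ escapes the GIT picture once these divisors are accounted for -- is the crux of the argument.
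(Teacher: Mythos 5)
Your treatment of part (2) is essentially the paper's own route: the paper also runs the exact sequence of [KKV], Proposition 5.1, with $Pic(U)$ torsion because $Cl(X)$ is (almost factoriality), with $coker(q^*:Pic(Y)\to Pic^G(U))$ finite because stabilizers are finite, with $H^1(G/G^0,E(U))$ torsion, and with $E(Y)=0$ by projectivity; and it computes $E(U)^G_\Q\simeq E(X)^G_\Q\times\Q^{r}$ by attaching to each orbit $B_i\in\Zcal(U)/G$ an explicit semiinvariant $f_i=\prod_{Z_j\in B_i}g_j^{N_{ij}}$ built from primary elements with matched vanishing orders (its Lemmas \ref{L:invertiblefunctions} and \ref{L:actiononZ}). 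Your sketch of this part, including the weight map on semiinvariant units (which is exactly the paper's $E(U)^G\to\chi(G)$), is sound.

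The genuine gap is in (1), (3) and (4). Your pivotal claim that ``$H^0(Y,L_\chi)$ is computed by the weight-$\chi$ semiinvariants in $A$'' is false precisely in the situation this theorem addresses: descent only gives $H^0(Y,\psi(\chi))\simeq H^0(U,L_\chi)^G$, and a semiinvariant section on $U$ may have poles along the divisorial components $Z_i$ of $X\setminus U$, so the restriction $H^0(X,L_\chi)^G\to H^0(U,L_\chi)^G$ is injective but in general not surjective. You correctly flag this as ``the crux'' in your closing paragraph, but you never supply the idea that resolves it. The paper's resolution is the notion of \emph{stable} GIT classes together with the lifting lemma (Lemma \ref{L:lifting}): the characters $\chi_i$ of the $f_i$ lie in $\ker\psi$, and for $m_i\gg 0$ division by $f_1^{nm_1}\cdots f_r^{nm_r}$ gives an isomorphism $R(X,L_{\chi\chi_1^{m_1}\cdots\chi_r^{m_r}})^G\to R(U,L_\chi)^G$, with surjectivity coming from finite generation of $A[S,T_1,\ldots,T_r]^G$ via Hilbert--Nagata. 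This single lemma is what yields finite generation of $R(Y,D)$ for \emph{every} divisor $D$ (your appeal to the Hu--Keel criterion presupposes this, yet without lifting to the affine $X$ you cannot invoke Hilbert--Nagata, since $R(U,L_\chi)^G$ lives on a non-affine open set); it repairs your effectivity claim in (3) (given $\psi(\chi)$ effective, one must first translate $\chi$ by the $\chi_i$, harmless for $\psi$, before a semiinvariant in $A$ exists); and it is why the chamber correspondence in (4) is with the stable chambers only. Furthermore, your proposed wall-crossing argument (``crossing a wall changes the model by a flip or a divisorial contraction'') is not how the paper closes (4) and would itself founder on the divisorial boundary: instead the paper shows directly that Mori-equivalent stable chambers have equal semistable loci, combining Corollary \ref{C:locusstable} ($X^{\chi-sst}=X^{\chi-sst}\cap U$ in codimension one for stable classes), the description of domains and exceptional loci of GIT-induced maps (Lemma \ref{L:rationalmaps}, which itself uses the AFD hypothesis to trivialize pulled-back bundles up to torsion), and a Hartogs extension across the resulting codimension-two discrepancy to transport non-vanishing sections from one semistable locus to the other. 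Without a replacement for these steps your outline does not deliver (1), (3) or (4).
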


Remark \ref{R:allquotientsgit} shows that the apparent restriction to geometric GIT quotients is actually not a restriction.\\

A natural question to ask is whether a quotient $U\to Z$, where $U\subset Y$ is an open subset of a Mori dream space, is again a Mori dream space. This question was answered in the positive by \cite{baeker}, but a description of the relevant data, like the Picard number or Mori chambers, is still missing.\\
As an application of Theorem \ref{T:quotientafd_intro} we give another proof, using techniques different from \cite{baeker}, which is given in section \ref{S:quotientofmds}, and includes a description of the Picard number and Mori chambers. Again, by Remark \ref{R:allquotientsgitmds}, there is no restriction in assuming that this quotient be given by a GIT chamber, but it seems more natural to state the result for arbitrary quotients nonetheless. 

\begin{thm}
Let a reductive group $G$ act on a Mori dream space $Y$ such that there exists a geometric quotient $U\to Z$ of an open and $G$-invariant subset $U\subset Y$, where $Z$ is projective and stabilizers for points on $U$ are finite. Then $Z$ is a Mori dream space with Picard number
$$\rho(Z)=\rho(Y)+rk(\chi(G))-|\Zcal(U)/G|.$$
Interpreted properly, the intersections of Mori chambers of $Z$ with $Pic(Y)_\Q$ form a refinement of a certain subsystem of Mori chambers of $Y$ (for the precise statement consider Theorem \ref{T:quotientofmds}).
\end{thm}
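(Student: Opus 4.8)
The plan is to realize the quotient $U\to Z$ as a GIT chamber quotient of an affine variety with almost factorial coordinate ring, so that Theorem~\ref{T:quotientafd_intro} applies. Since $Y$ is a Mori dream space, its Cox ring $R=\mathrm{Cox}(Y)$ is a finitely generated, (almost) factorial $\mathrm{Cl}(Y)$-graded $k$-algebra with $R^*=k^*$, and $Y$ is recovered from the total coordinate space $\widehat Y=\mathrm{Spec}(R)$ as the GIT quotient $\widehat Y^{ss}\to Y$ by the Picard torus $T=\mathrm{Spec}(k[\mathrm{Cl}(Y)])$ for an ample class of $Y$, regarded as a character of $T$. The two structural inputs I will use are that the irrelevant locus $\widehat Y\setminus\widehat Y^{ss}$ has codimension at least two in $\widehat Y$, and that $E(\widehat Y)=0$.

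First I would lift the $G$-action on $Y$ to $\widehat Y$. By the equivariant theory of Cox rings the action lifts to an extension $\widehat G$ of $G$ by $T$ fitting into $1\to T\to\widehat G\to G\to 1$; since $G$ is reductive and $T$ is a torus, $\widehat G$ is again reductive, and $R^{\widehat G}=(R^T)^G=k$ because $R^T=\Gamma(Y,\Ocal_Y)=k$. The two actions combine: $T$ realizes $Y$ as a quotient of $\widehat Y$, while the residual $G$-action on $Y$ is the one forming $Z$. Let $\widehat U\subset\widehat Y^{ss}$ be the preimage of $U$; then the composite $\widehat U\to U\to Z$ exhibits $Z$ as a geometric quotient of $\widehat U$ by $\widehat G$. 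The hypothesis that stabilizers on $U$ are finite, together with the finiteness of $T$-stabilizers on the stable locus, forces stability and semistability to coincide for the relevant $\widehat G$-character, so $\widehat U$ is the stable locus of a character $\chi_0\in\chi(\widehat G)$ with $s=ss$. This is exactly the setting of Theorem~\ref{T:quotientafd_intro}, which immediately gives that $Z$ is a Mori dream space.

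For the Picard number, Theorem~\ref{T:quotientafd_intro}(2) yields $\rho(Z)=rk(\chi(\widehat G))-rk(E(\widehat Y)^{\widehat G})-|\Zcal(\widehat U)/\widehat G|$. Here $rk(\chi(\widehat G))=rk(\chi(G))+\rho(Y)$, since $\chi(T)_\Q=Pic(Y)_\Q$ and the weights of the Cox grading extend rationally to $\widehat G$, and $E(\widehat Y)=0$. It remains to identify $\Zcal(\widehat U)/\widehat G$ with $\Zcal(U)/G$. I would split $\widehat Y\setminus\widehat U$ into the irrelevant locus $\widehat Y\setminus\widehat Y^{ss}$, which contributes nothing as it has codimension at least two, and the preimage of $Y\setminus U$, whose codimension-one components are in $\widehat G$-equivariant bijection with those of $Y\setminus U$ under $\widehat Y^{ss}\to Y$. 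Hence $|\Zcal(\widehat U)/\widehat G|=|\Zcal(U)/G|$, giving the stated formula for $\rho(Z)$.

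Finally, for the chambers I would transport Theorem~\ref{T:quotientafd_intro}(4): the Mori chambers of $Z$ correspond to the stable GIT chambers in $\chi(\widehat G)_\Q$. Restricting the GIT wall-and-chamber decomposition to the linear subspace $\chi(T)_\Q=Pic(Y)_\Q\subset\chi(\widehat G)_\Q$ refines the decomposition on $\chi(T)_\Q$ induced by $T$-GIT, and the latter is precisely the subsystem of Mori chambers of $Y$ meeting the admissible characters. Intersecting each Mori chamber of $Z$ with $Pic(Y)_\Q$ therefore produces a refinement of a subsystem of Mori chambers of $Y$, which is the asserted statement (made precise in Theorem~\ref{T:quotientofmds}). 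The main obstacle I anticipate is the first step: carrying out the equivariant lifting so that $\widehat G$ is genuinely reductive with the correct character lattice, and checking that the induced $\widehat G$-quotient is a geometric chamber quotient in the exact sense Theorem~\ref{T:quotientafd_intro} requires; once this reduction is established, the remaining assertions follow from the cited theorem together with the codimension-two property of the Cox construction.
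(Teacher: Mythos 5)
Your strategy coincides with the paper's own proof of Theorem \ref{T:quotientofmds}: pass to $X=\mathrm{Spec}(\mathrm{Cox}(Y))$, lift the $G$-action, present $Z$ as a geometric quotient of $q^{-1}(U)$ by the enlarged group, apply Theorem \ref{T:quotientmoridream}, use the codimension-two irrelevant locus together with $E(X)=0$ (Remark \ref{R:coxnounits}) for the Picard number, and restrict the chamber fan in $\chi(T\times G)_\Q$ to $\chi(T)_\Q\simeq Pic(Y)_\Q$ for the chamber statement. However, one step is genuinely wrong as you argue it: you claim that finiteness of stabilizers on $U$, together with finiteness of $T$-stabilizers, \emph{forces} stability and semistability to coincide for the relevant character. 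That implication fails in general --- a semistable point can perfectly well have finite stabilizer and non-closed orbit, so finite stabilizers do not rule out strictly semistable points. The paper closes this step by a different mechanism: Remark \ref{R:allquotientsgitmds}, reducing to Remark \ref{R:allquotientsgit}, pulls back an ample bundle from $Z$, identifies a multiple of it with $L_\chi$ restricted to the open set, runs Mumford's converse to place the open set inside $X^{\chi-st}$, and then uses projectivity of $Z$ and the orbit-closure property of good quotients to conclude that the open set equals $X^{\chi-st}=X^{\chi-sst}$. In other words, the coincidence of stability and semistability is \emph{extracted from} the geometric-quotient-with-projective-target hypothesis (together with the standing assumption of section \ref{SS:git}), not deduced from finiteness of stabilizers; without this repair your reduction to Theorem \ref{T:quotientafd_intro} is unjustified.

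Two further points of the reduction are handled differently, and more carefully, in the paper. First, you invoke an extension $1\to T\to \widehat{G}\to G\to 1$ with $T=\mathrm{Spec}(k[\mathrm{Cl}(Y)])$ and quote factoriality of the $\mathrm{Cl}(Y)$-graded Cox ring; the paper's framework instead grades the Cox ring by a chosen $\Z^r$-basis of $Pic(Y)_\Q$, for which almost factoriality is \emph{not} automatic --- Lemma \ref{L:coxlifting} proves it by passing to a preferred basis in the sense of Jow and extracting the exact sequence $0\to\chi(T)\to Pic(Y)\to Pic(U)\to 0$ from the KKV diagram, so that $Pic(U)$, and hence $Cl(X)$, is torsion. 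Second, rather than confronting a possibly nonsplit extension, the paper splits the problem outright by fixing $G$-linearizations of (powers of) the basis bundles $D_i$, which makes the $T$- and $G$-actions commute; the acting group is then the honest product $T\times G$, giving $\chi(T\times G)=\chi(T)\times\chi(G)$ integrally, and since $T$ is a connected torus (your $\mathrm{Spec}(k[\mathrm{Cl}(Y)])$ may be disconnected when $\mathrm{Cl}(Y)$ has torsion) one gets the identification of $\Zcal(U)/G$ with $\Zcal(q^{-1}(U))/(T\times G)$ that your divisor count needs. These last two items are repairs rather than obstructions --- your anticipated ``main obstacle'' is exactly where the paper invests Lemma \ref{L:coxlifting} --- but the stability-equals-semistability step is the one place where your argument, as written, does not go through.
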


Since quiver moduli are GIT quotients of affine space, one might expect that they are Mori dream spaces under some mild assumptions. Indeed, an application of the result of \cite{baeker} directly confirms this expectation, but does not provide any information on the Picard group or the Mori chambers (consider Remark \ref{R:quivermdsalternative}).\\
The attempts to apply Theorem 2.3 of \cite{hukeel} to the quiver situation seem to be hampered by the codimension restriction - see e.g. the discussion of quiver flag varieties in \cite{craw}, in particular the discussion of the occurrence of unstable codimension one components in the proof of Proposition 3.1.\\
In section \ref{S:quiver} we use our extended result to give a quick proof that certain quiver moduli are Mori dream spaces, and give a formula for their Picard number as well as a description of the Mori chambers.

\begin{thm}
Suppose that $Q$ is a quiver without oriented cycles, that $d$ is a dimension vector, and that $\theta$ is a stability condition such that stability and semistability coincide.\\
Then $\M_{d,\theta}(Q)$ is a Mori dream space satisfying the assertions of Theorem \ref{T:quotientafd_intro}. In particular, the Picard number is given as
$$\rho(\M_{d,\theta}(Q))=|Q_0|-(r+1),$$
where $r$ is the number of components of the unstable locus which are of codimension one.\\
\end{thm}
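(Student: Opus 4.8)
The plan is to realize $\M_{d,\theta}(Q)$ as a GIT chamber quotient of an affine space and apply Theorem \ref{T:quotientafd_intro} verbatim. I set $X=R_d(Q)=\bigoplus_{a\colon i\to j}\mathrm{Hom}(k^{d_i},k^{d_j})$, so that $A=k[X]$ is a polynomial ring and in particular a factorial, hence almost factorial, domain. The acting group will be $G=\GL_d/\G_m$, where $\GL_d=\prod_{i\in Q_0}\GL_{d_i}$ and $\G_m$ is the diagonal scalar torus acting trivially on $X$; this $G$ is reductive and connected. Normalizing the stability condition so that $\sum_i d_i\theta_i=0$, the character $\chi_\theta=\prod_i\det_i^{\theta_i}$ is trivial on the diagonal $\G_m$ and therefore descends to a character $\chi_0$ of $G$. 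King's criterion then identifies the GIT-(semi)stable locus for $\chi_0$ with the locus of $\theta$-(semi)stable representations. By hypothesis stability and semistability coincide, and since the only automorphisms of a $\theta$-stable representation are scalars, every point of the stable locus $U$ has trivial $G$-stabilizer; thus $U\to Y:=\M_{d,\theta}(Q)$ is the associated geometric quotient.

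The key input is the condition $A^G=k$, and this is precisely where the absence of oriented cycles enters. By the classical Le Bruyn--Procesi description, the invariant ring $A^{\GL_d}=A^G$ is generated by traces along oriented cycles of $Q$; when $Q$ has no oriented cycles this ring reduces to the constants $k$. This simultaneously verifies the remaining hypothesis of Theorem \ref{T:quotientafd_intro} and shows that $Y=\mathrm{Proj}\big(\bigoplus_{n\ge 0}A^{G,\,n\chi_0}\big)$ is projective over $\mathrm{Spec}(A^G)=\mathrm{Spec}(k)$, hence a genuine projective Mori dream space. With all hypotheses in place, assertions (1)--(4) of Theorem \ref{T:quotientafd_intro} transfer to $\M_{d,\theta}(Q)$ directly.

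It then remains to read off the Picard number from assertion (2), which gives $\rho(Y)=rk(\chi(G))-rk(E(X)^G)-|\Zcal(U)/G|$. Here $rk(\chi(G))=|Q_0|-1$, since $\chi(G)$ is the kernel of the surjection $\Z^{Q_0}\to\Z$, $\theta\mapsto\sum_i d_i\theta_i$. Because $X=\A^N$ has no nonconstant units, $E(X)=0$ and so $rk(E(X)^G)=0$. Finally, $G$ is connected, so it fixes each irreducible component of the unstable locus $X\setminus U$; hence it acts trivially on $\Zcal(U)$ and $|\Zcal(U)/G|=|\Zcal(U)|=r$, the number of codimension-one components of the unstable locus. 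Substituting yields $\rho(\M_{d,\theta}(Q))=(|Q_0|-1)-0-r=|Q_0|-(r+1)$. The only genuine subtlety is the bookkeeping forced by passing to $G=\GL_d/\G_m$: quotienting by the trivially acting scalars is what makes the stabilizers trivial and, at the same time, accounts for the $-1$ in the Picard formula, so I would state this reduction explicitly rather than working with $\GL_d$ itself.
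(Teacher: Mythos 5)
Your proposal is correct and follows essentially the same route as the paper: the paper also obtains this theorem by applying Theorem \ref{T:quotientafd_intro} (via Theorem \ref{T:quotientmoridream}) to $X=R_d(Q)$, the spectrum of a polynomial ring and hence of a UFD, acted on by the connected reductive group $PG_d$, with $E(X)=0$, $rk(\chi(PG_d))=|Q_0|-1$ from the identification $\chi(PG_d)\simeq d^\perp\subset\Z^{Q_0}$, and $|\Zcal(U)/PG_d|=|\Zcal(U)|=r$ by connectedness. The details you spell out --- King's criterion matching GIT (semi)stability with $\theta$-(semi)stability, triviality of stabilizers of stable points in $PG_d$, and the Le Bruyn--Procesi argument giving $A^{PG_d}=k$ (hence projectivity) when $Q$ has no oriented cycles --- are exactly the facts the paper invokes as well-known, so your write-up is a more explicit version of the same proof.
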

Since the GIT chambers of quiver moduli can be treated combinatorically, at least in specific examples, we expect that this might provide an interesting class of examples for Mori dream spaces. Conversely, it might be helpful in the application of birational methods to the representation theory of quivers (or artinian algebras).\\

It might be possible to extend the results of this document to good quotients in future work. Another potential application is the problem of finding Mori dream subspaces, since the known results are heavily restricted by assumptions on the unstable locus (see e.g. \cite{jow}).\\

\noindent {\bf Acknowledgement}. 
The author thanks David Schmitz for helpful discussions and feedback to an earlier draft of this document, and Markus Reineke for answering questions about quiver moduli. Special thanks belongs to Henrik Sepp\"anen for his patient guidance and many fruitful meetings.

\section{Preliminaries \label{S:preliminaries}}

Let $k$ denote an algebraically closed field. We assume all varieties to be defined over $k$ and to be irreducible, unless stated otherwise.

Given any variety $X$ we define the group
$$E(X)=\Ocal(X)^*/k^*.$$
This group is free and finitely generated by Proposition 1.3 of \cite{kkv}.\\

We will often use a variant of the Hartogs lemma for normal varieties (see Theorem 6.45 in \cite{goertzwedhorn}).
\begin{lemma}\label{L:hartogs}
Let $U\subset X$ denote an open subset in a normal variety such that $X\setminus U$ is of codimension greater or equal to $2$. Then the restriction map
$$\Ocal(X)\to \Ocal(U)$$
is an isomorphism.
\end{lemma}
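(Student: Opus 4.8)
The plan is to establish injectivity from irreducibility, and then to reduce surjectivity to a purely algebraic statement about normal domains. Since $X$ is irreducible and $X\setminus U$ is a proper closed subset, $U$ is dense; hence any regular function on $X$ vanishing on $U$ vanishes identically, so the restriction map is injective. For surjectivity I would first reduce to the affine case: cover $X$ by affine open subsets $X_i = Spec(A_i)$, each of which is a normal domain, and note that $U_i := U\cap X_i$ is open in $X_i$ with $X_i\setminus U_i$ of codimension $\geq 2$. If every $f\in\Ocal(U)$ extends uniquely over each $X_i$, the local extensions automatically agree on overlaps by the density and injectivity already established, and therefore glue to a global extension in $\Ocal(X)$.

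It then remains to treat the affine case $X = Spec(A)$ with $A$ a normal Noetherian domain and fraction field $K$. A function $f\in\Ocal(U)$ is in particular an element of $K$, since $U$ is dense, and I want to show $f\in A$. The key input is the algebraic characterization of normality: for a normal Noetherian domain one has $A = \bigcap_{\mathrm{ht}(\p)=1} A_\p$, the intersection ranging over all height-one primes (Serre's criterion, i.e. the conditions $R_1$ and $S_2$, or Krull's theorem). Now each height-one prime $\p$ corresponds to the generic point of a codimension-one irreducible closed subset of $X$; since $X\setminus U$ has codimension $\geq 2$, this generic point lies in $U$, so $f$ is regular there and thus $f\in A_\p$. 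As this holds for every height-one prime, $f$ lies in the intersection $\bigcap_{\mathrm{ht}(\p)=1} A_\p = A$, which yields the desired extension.

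The main obstacle is the surjectivity, and within it the algebraic fact $A = \bigcap_{\mathrm{ht}(\p)=1} A_\p$; the injectivity and the gluing are formal. One should take care that the codimension hypothesis is used exactly to guarantee that no height-one prime corresponds to a point of $X\setminus U$, which is precisely what forces membership of $f$ in every relevant localization.
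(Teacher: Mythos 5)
Your proof is correct, and it is essentially the standard argument: the paper itself gives no proof of this lemma, citing Theorem 6.45 of G\"ortz--Wedhorn, whose proof proceeds exactly as you do --- injectivity from density on an integral scheme, reduction to the affine normal case, and the identity $A=\bigcap_{\mathrm{ht}(\p)=1}A_\p$ for a Noetherian normal domain, with the codimension hypothesis ensuring every height-one generic point lies in $U$. So there is nothing to correct; your write-up simply makes the cited textbook proof explicit.
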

~\\

\subsection{Rational maps and Mori dream spaces}~\\

To clarify the notation used in the present paper, we give some basic definitions and conventions. The notions of rational contractions, SQMs and Mori dream spaces are given as in the foundational paper \cite{hukeel}. We sometimes use equivalent descriptions provided by \cite{casagrande}.\\

The domain $dom(f)$ of a rational map $f:X\dasharrow Y$ is the maximal open subset in $X$ such that there exists a regular representative
$$f':dom(f)\to Y.$$
The image of $f$ is defined as $im(f)=im(f')$, and if $V\subset Y$ then
$$f^{-1}(V)=(f')^{-1}(V)\subset dom(f).$$
The exceptional locus $exc(f)$ is defined as the complement of the set of points $x\in dom(f)$ with the property that there exists an open neighborhood $f(x)\in V\subset Y$ such that the restriction $f':f^{-1}(V)\to V$ is an isomorphism.\\
Two rational maps $f:X\dasharrow Y$ and $g:X\dasharrow Y'$ are called Mori equivalent, if there is an isomorphism between their images such that the obvious diagram commutes as rational maps.\\

Let $f:X\dasharrow Y$ denote a dominant rational map of projective and normal varieties, where additionally $X$ is $\Q$-factorial. Then $f$ is called a rational contraction if there exists a resolution
$$\xymatrix{&W\ar[ld]_p\ar[rd]^q& \\
X \ar@{-->}[rr]_f &&Y,}$$
where $W$ is smooth and projective, $p$ is birational, and for every $p$-exceptional effective divisor $E$ on $W$ the equation
$$q_*(\Ocal_W(E))\simeq \Ocal_Y$$
holds. For a $\Q$-Cartier divisor $D$ on $Y$ the pullback is defined as
$$f^*(D)=p_*(q^*(D)).$$
All these notions do not depend on the choice of a resolution, and the pullback is well-defined on numerical equivalence classes.\\

The map $f$ is called a small $\Q$-factorial modification (SQM) if $Y$ is $\Q$-factorial and $f$ is an isomorphism in codimension one. In that case $f$ is automatically a rational contraction (consider Remark 2.2 in \cite{casagrande}).

\begin{defin}
A projective and normal variety $X$ is called a Mori dream space if the following holds.
\begin{enumerate}
\item $X$ is $\Q$-factorial, and $Pic(X)_\Q=N^1(X)_\Q$.
\item \label{I:nefcone} The nef cone $\Nef(X)$ is spanned by finitely many semiample line bundles.
\item There is a finite collection of SQMs $f_i:X\dasharrow X_i$ such that $X_i$ satisfies (\ref{I:nefcone}) and the moving cone $\Mov(X)$ is the union of the cones $f_i^*(\Nef(X_i))$.
\end{enumerate}
\end{defin}

Let $D$ denote a Cartier divisor on a projective and normal variety $X$. The section ring of $D$ is defined as
$$R(X,D)=\bigoplus_{n\geq 0}H^0(X,nD)=\bigoplus_{n\geq 0}H^0\left(X,\Ocal_X(nD)\right).$$
If $R(X,D)$ is finitely generated, then evaluation of sections gives rise to a rational map
$$f_D:X\dasharrow Y_D=Proj(R(X,D)),$$
which is regular outside the base locus of $D$. Up to multiples, these maps are rational contractions (see Lemma 1.6 in \cite{hukeel}). Two divisors are called Mori equivalent if their associated rational maps are Mori equivalent.\\

\subsection{Geometric Invariant Theory} \label{SS:git}~\\
We recollect basic definitions of Geometric Invariant Theory, mainly to fix notation. For the relevant aspects of VGIT we refer to \cite{dolghu} or \cite{thaddeus}, or to \cite{halic} for the transfer to the affine case.\\

Let a reductive group $G$ act on a variety $X$.\\
A $G$-line bundle $E\to X$ on $X$ is a line bundle $E\to X$, equipped with an action $G\times E\to E$ such that the diagram
$$\xymatrix{G\times E \ar[r] \ar[d] & E \ar[d] \\
G\times X \ar[r] & X}$$
commutes, and the induced action on fibres of $E$ is linear. By $H^0(X,E)^G$ we denote the invariant sections, and by $R(X,E)^G$ the section ring of invariant sections. Two $G$-line bundles $E,E'$ are isomorphic if there exists an equivariant isomorphism of line bundles $E\to E'$ (leaving the base space fixed). By $Pic^G(X)$ we denote the group of isomorphism classes of $G$-line bundles on $X$. Interpreted correctly, $Pic^\bullet(\bullet)$ is functorial (see Lemma \ref{L:picfunctor} below).\\

The semistable locus with respect to $E$ is defined as
$$X^{E-sst}=\bigcup_{f}D(f),$$
where $D(f)=X\setminus N(f)$, and the union is taken over all $f\in R(X,E)^G$ such that $D(f)$ is affine. The stable locus $X^{E-st}$ is the set of points $x\in X^{E-sst}$ such that the stabilizer $G_x$ is finite, and the orbit $G*x$ is closed in the semistable locus. Both sets could possibly be empty.\\
The evaluation of sections defines a good quotient
$$q_E:X^{E-sst}\to Y_E=Proj(R(X,E)^G),$$
which restricts to a geometric quotient on the stable locus.\\

In the following we will denote by $L\to X$ the trivial line bundle, and if $\chi\in\chi(G)$ is a character, by $L_\chi$ the corresponding $G$-line bundle with action
$$g*(x,e)=(g*x,\chi(g)\cdot e).$$
A section $f\in H^0(X,L_\chi)^G$ is called a semiinvariant and satisfies
$$g*f=\chi(g)\cdot f.$$
Note that some authors call a function as given above a semiinvariant of rank $1$. We do not need this distinction.\\
We abbreviate $X^{\chi-sst}=X^{L_\chi-sst}$, and similarly for the stable locus. The associated good quotient is denoted by $q_\chi:X^{\chi-sst}\to Y_\chi$.\\

Two characters are called GIT equivalent if the semistable loci are the same. A character, or a GIT class, is called $G$-ample if the semistable locus is nonempty. We will not make an explicit distinction between GIT classes and their images in $Pic^G(X)_\Q$ or $Pic^G(X)_\R$.\\

\begin{lemma}\label{L:picfunctor}
Consider the category $C$ where objects are pairs $(X,G)$ of algebraic groups acting on varieties $X$, and morphisms
$$(f,\varphi):(X,G)\to (Y,H)$$
are pairs of morphisms of varieties and algebraic groups respectively such that $f(g*x)=\varphi(g)*f(x)$ for all $x\in X$ and $g\in G$. Then there is a functor
$$Pic^\bullet(\bullet):C\to Ab,$$
extending the usual functor $Pic(\bullet)$. If $(f,\varphi)$ is a morphism in $C$ as above, and $\chi\in\chi(H)$, then $(f,\varphi)^*(L_\chi)=L_{\varphi^*(\chi)}$.

\begin{proof}
We use the notation $(f,\varphi)$ for a morphism in $C$ as in the statement of the lemma. Recall that the pullback of an ordinary line bundle $\pi:E\to Y$ is given as
$$f^*(E)=\left\{(x,e)\in X\times E | f(x)=\pi(e)\right\},$$
and if additionally $E$ is an $H$-line bundle, we define $g*(x,e)=(g*x,\varphi(g)*e)$. Now, it is straightforward to check that this construction is well-defined and satisfies the assertions.
\end{proof}
\end{lemma}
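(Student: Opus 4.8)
The plan is to construct the pullback on morphisms explicitly and then observe that the equivariance condition defining morphisms of $C$ is precisely what makes this construction well-defined; everything else is a transfer of the classical line-bundle formalism. First I would fix the action of the functor on a morphism $(f,\varphi):(X,G)\to (Y,H)$ and an $H$-line bundle $\pi:E\to Y$. I would take the ordinary pullback
$$f^*(E)=\left\{(x,e)\in X\times E \mid f(x)=\pi(e)\right\}$$
and equip it with the candidate $G$-action $g*(x,e)=(g*x,\varphi(g)*e)$. The one step where the hypothesis on morphisms is genuinely used is checking that this action is well-defined, i.e.\ that $g*(x,e)$ again lies in $f^*(E)$: since $E$ is an $H$-line bundle one has $\pi(\varphi(g)*e)=\varphi(g)*\pi(e)=\varphi(g)*f(x)$, and the compatibility condition $f(g*x)=\varphi(g)*f(x)$ then forces the two expressions to agree.

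Next I would verify that this produces an honest $G$-line bundle. The group-action axioms follow from $\varphi$ being a homomorphism together with the $H$-action on $E$ being an action; the defining diagram commutes by construction; and linearity on fibres holds because $g$ carries the fibre over $x$ to the fibre over $g*x$ via the linear map $e\mapsto\varphi(g)*e$. An equivariant isomorphism $E\to E'$ of $H$-line bundles induces an equivariant isomorphism $f^*(E)\to f^*(E')$, so the assignment passes to isomorphism classes, and compatibility of pullback with tensor products shows that $(f,\varphi)^*:Pic^H(Y)\to Pic^G(X)$ is a group homomorphism.

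For functoriality I would check that $(id,id)^*$ is the identity and that a composite $(f,\varphi)\circ(g,\psi)$ pulls back as $(g,\psi)^*\circ(f,\varphi)^*$; both reduce to the known statements for ordinary pullback once one notes that the transported actions match, using that the composite morphism of groups is $\varphi\circ\psi$. Taking $G$ and $H$ trivial recovers the classical $Pic$ functor, which gives the asserted extension.

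Finally, for the formula on characters I would compute the induced action on $f^*(L_\chi)$. Trivializing $L_\chi=Y\times k$ and its pullback $f^*(L_\chi)\cong X\times k$ via the identification $(x,(f(x),e))\leftrightarrow(x,e)$, the $G$-action becomes
$$g*(x,e)=(g*x,\chi(\varphi(g))\cdot e)=(g*x,(\varphi^*\chi)(g)\cdot e),$$
which is exactly the defining action of $L_{\varphi^*(\chi)}$. I expect the only substantive point to be the well-definedness of the pulled-back action, since that is the sole place the morphism hypothesis enters; the remaining verifications are routine, which is why the check can be summarized as straightforward.
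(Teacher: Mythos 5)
Your proposal is correct and follows the same approach as the paper: it uses the identical explicit pullback $f^*(E)=\{(x,e)\in X\times E \mid f(x)=\pi(e)\}$ equipped with the action $g*(x,e)=(g*x,\varphi(g)*e)$, merely writing out the verifications the paper dismisses as straightforward. The details you supply — well-definedness via the equivariance condition, functoriality, and the trivialization computation giving $(f,\varphi)^*(L_\chi)=L_{\varphi^*(\chi)}$ — are all accurate.
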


For projective $X$, the results of \cite{dolghu} tell us that there are finitely many GIT classes, which additionally are rational polyhedral. This result was transferred to the case of a $G$-module by \cite{halic}. If $X$ is any normal affine variety, we can use the standard linearization
$$X\hookrightarrow \A^N$$
into a $G$-module $\A^N$ to give the description $X^{\chi-sst}=X\cap\left(\A^N\right)^{\chi-sst}$, and a similar description for the stable loci. Thus, at least if all $G$-line bundles on $X$ are up to multiples of the form $L_\chi$, we have the same chamber behavior for the action of $G$ on $X$.\\
By a GIT chamber we mean a GIT class whose relative interior is open, or the closure thereof. We will assume henceforth that stability and semistability with respect to a chamber coincide. Note that this may fail for all GIT chambers at once, or it may fail for some chambers, while it holds for others (consider the counterexample \cite{ressayre}). However there are two important situations where our assumption is satisfied.
\begin{enumerate}
\item This holds for the action of $PG_d$ on $R_d(Q)$, where $Q$ is a quiver (for the definition see Section \ref{S:quiver}), and $d$ is a coprime dimension vector, i.e. the entries of $d$ admit no nontrivial common divisor (consider Section 3.5 in \cite{reineke}).
\item If $G=T$ is a torus, the fact that stability and semistability coincide for one chamber implies the same assertion for the other chambers (again we use a linearization to reduce to the case of a $T$-module, where it holds by Proposition 3.10 in \cite{halic}).
\end{enumerate}
Furthermore, we can use the linearization to transfer the Hilbert-Mumford criterion for $G$-modules, as established in \cite{halic} or \cite{king}, to an arbitrary normal affine variety.\\

Suppose that we are given two characters $\chi,\chi'\in\chi(G)$, and assume that $X^{\chi-st}$ is nonempty. Setting
$$V=X^{\chi-st}\cap X^{\chi'-sst},$$
the restriction $V\to q_\chi(V)\subset Y_\chi$ is again a geometric quotient, with open image. Further, the composition
$$V\subset X^{\chi'-sst}\to Y_{\chi'}$$
is $G$-invariant, and hence factors to give a morphism $q_\chi(V)\to Y_{\chi'}$. This defines a rational map $f:Y_\chi\dasharrow Y_{\chi'}$.\\
The situation is summarized in the following diagram.
$$\xymatrix{X^{\chi-sst} \ar[d]_{q_\chi} & V \ar[d] \ar@{_(->}[l] \ar@{^(->}[r] & X^{\chi'-sst} \ar[d]^{q_{\chi'}} \\
Y_\chi & q_\chi(V) \ar@{_(->}[l] \ar[r] & Y_{\chi'}}$$
In Lemma \ref{L:rationalmaps} we give a partial description of the domain and exceptional locus of this map.\\
Note that $f$ is a birational map if $X^{\chi'-st}\neq \emptyset$.\\
Indeed, the images under $q_\chi$ and $q_{\chi'}$ of
$$V'=X^{\chi-st}\cap X^{\chi'-st}$$
give open subsets $q_\chi(V')\subset Y_\chi$ and $q_{\chi'}(V')\subset Y_{\chi'}$, which are both geometric quotients of $V'$, and are hence isomorphic.\\

\subsection{Almost factorial domains}\label{SS:afds}~\\
Let $X=Spec(A)$ denote an affine variety. In particular, $X$ is supposed to be irreducible, so $A$ is an integral domain.\\
It is well-known that $A$ is a UFD if and only if $X$ is normal and $Cl(X)=0$. Following \cite{storch}, we call $A$ an almost factorial domain (AFD) if and only if $X$ is normal and $Cl(X)$ is torsion.\\

For the ease of the reader we review the relevant properties of AFDs. We start with properties and conventions which are established in \cite{storch}.
\begin{enumerate}
\item A nonzero nonunit $x\in A$ is called primary if the ideal $(x)\subset A$ is a primary ideal. In that case, the associated prime ideal $\p_x$ is defined as the prime ideal associated to $(x)$.
\item Two primary elements which have the same associated prime ideal are up to units powers of a third primary element. Conversely, powers of a primary element are again primary.
\item For a nonzero nonunit $y\in A$ a suitable power $y^n$ can be factored into primary elements.
\item If $D\subset X$ is a prime divisor, there exists a primary element $x\in A$ and a natural number $n$ such that
$$nD=div(x).$$
\end{enumerate}

Additionally, we need the uniqueness of the associated prime ideals in a primary decomposition, which can be proven by a slight variation of the well-known proof that the elements in a decomposition into primes are uniquely determined. We note that the prime ideal associated to a primary ideal is given as its radical ideal.

\begin{lemma}\label{L:uniquenessassprimes}
The following statements hold in any integral domain.
\begin{enumerate}
\item Suppose there are two decompositions
$$a_1\cdot\ldots\cdot a_r=b_1\cdot\ldots\cdot b_s$$
into primary elements. Then the sets of the prime ideals associated to the $a_i$ and associated to the $b_j$ respectively coincide.
\item If we have a primary element $x\in A$ such that
$$x\mid a_1\cdot\ldots\cdot a_r,$$
then $x|a_i^n$ for a suitable index $i$ and a suitable power $n$.
\item Given two primary elements $x,y\in A$ such that $x\mid y$, it follows that the associated prime ideals coincide.
\end{enumerate}

\begin{proof}
The first assertion easily follows from the second and third assertions.\\

Induction over $r$ proves the second assertion, where $r=1$ is trivial. If $x\mid a_1\cdot\ldots\cdot a_{r-1}$, we are done by the induction hypothesis, and if not we have $a_r^n\in (x)$ since $x$ is primary.\\

Under the hypothesis of the third assertion we may write $y=xy'$. Now $y\nmid y'$, since otherwise $x$ would be a unit, so $y\mid x^n$ for some power $n$. Thus there are inclusions of ideals
$$(y)\subset (x) \supset (x^n) \subset (y).$$
This yields the inclusions
$$\p_y\subset \p_x=\p_{x^n}\subset \p_y$$
by taking the radicals, which finishes the proof.
\end{proof}
\end{lemma}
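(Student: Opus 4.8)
The plan is to establish the second and third assertions first and then deduce the first, following the logical dependence the statement already suggests. Throughout I write $\p_z=\sqrt{(z)}$ for the prime associated to a primary element $z$, and I will use freely the two elementary facts that $\sqrt{(z^n)}=\sqrt{(z)}=\p_z$ and that a power of a primary element is again primary (property (2) of the list above). The only ring-theoretic input beyond radicals is the defining property of a primary ideal: if $uv\in(z)$ and $u\notin(z)$, then $v^m\in(z)$ for some $m$.

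For the third assertion I would prove the two inclusions $\p_y\subseteq\p_x$ and $\p_x\subseteq\p_y$ separately. The first is immediate, since $x\mid y$ gives $(y)\subseteq(x)$ and taking radicals yields $\p_y\subseteq\p_x$. For the reverse inclusion I would write $y=xy'$ and observe that $y\nmid y'$, for otherwise $x$ would be a unit, contradicting that a primary element is a nonunit. Then $xy'\in(y)$ while $y'\notin(y)$, so primariness of $(y)$ forces $x^n\in(y)$ for some $n$, i.e.\ $(x^n)\subseteq(y)$; taking radicals and using $\sqrt{(x^n)}=\p_x$ gives $\p_x\subseteq\p_y$. This reverse inclusion is the one place where primariness is genuinely exploited, and I expect it to be the only mildly delicate step of the whole lemma.

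For the second assertion I would induct on $r$, the case $r=1$ being trivial. In the inductive step, if $x\mid a_1\cdots a_{r-1}$ we are done by the induction hypothesis; otherwise $a_1\cdots a_{r-1}\cdot a_r\in(x)$ with $a_1\cdots a_{r-1}\notin(x)$, so primariness of $(x)$ yields $a_r^n\in(x)$ for some $n$, that is $x\mid a_r^n$.

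Finally I would combine the two for the first assertion. Given $a_i$, it divides $b_1\cdots b_s$, so the second assertion provides an index $j$ and a power $n$ with $a_i\mid b_j^n$. Since $b_j^n$ is again primary with $\p_{b_j^n}=\p_{b_j}$, the third assertion applied to the divisibility $a_i\mid b_j^n$ gives $\p_{a_i}=\p_{b_j}$; here the point to be careful about is to pass to the power $b_j^n$ rather than to $b_j$ itself, so that (3) is applicable. Thus every prime associated to some $a_i$ occurs among the $\p_{b_j}$, and the symmetric argument gives the reverse containment, so the two sets coincide. I would also note that no Noetherian or factoriality hypothesis enters anywhere, so the conclusion holds in an arbitrary integral domain as claimed.
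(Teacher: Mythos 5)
Your proof is correct and takes essentially the same route as the paper's: induction on $r$ for the second assertion, the $y=xy'$ argument with primariness of $(y)$ and passage to radicals for the third, and deducing the first from these two. You additionally spell out the one detail the paper leaves implicit in that deduction, namely that (3) must be applied to the primary power $b_j^n$ (using $\p_{b_j^n}=\p_{b_j}$) rather than to $b_j$ itself, which is exactly the right point of care.
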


\section{Quotients of spectra of AFDs \label{S:quotientafd}}

As a first step we compute the regular invertible functions on an open subset of the spectrum of an AFD. Obviously, this does not require the introduction of a group action yet.

\begin{conv}\label{generalsituation}
Let $A$ denote an AFD, and let $U\subset X=Spec(A)$ denote an open subset. By
$$\Zcal=\Zcal(U)=\{Z_1,\ldots,Z_q\}$$
we denote the set of irreducible components of $X\setminus U$ which are of codimension one in $X$. Interpreting the elements $Z_i$ as prime divisors on the variety $X$, the fact that $A$ is almost factorial implies that $n_iZ_i=div(g_i)$ is a principal divisor for some natural number $n_i$ and a primary element $g_i\in A$ (consider section \ref{SS:afds}). In particular it holds that $Z_i=N(g_i)$.\\
In the special case where $A$ is a UFD, we have that every prime divisor is principal, and hence $Z_i=div(g_i)$ for prime elements $g_i\in A$.
\end{conv}

\begin{lemma}
\label{L:invertiblefunctions}
In the situation of Convention \ref{generalsituation} the map
$$F:A^*\times\Z^q\to \Ocal(U)^*,$$
given by
$$(\lambda,a_1,\ldots,a_q)\mapsto \lambda\cdot g_1^{a_1}\cdot\ldots\cdot g_q^{a_q}|_U,$$
is injective with torsion cokernel. In particular, there is an isomorphism
$$E(X)_\Q\times \Q^q\to E(U)_\Q.$$

If $A$ is a UFD the map $F$ is an isomorphism, and $E(X)\times \Z^q\simeq E(U)$.
\end{lemma}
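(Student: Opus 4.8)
The plan is to realize $\Ocal(U)^*$ inside the function field of $X$ and to keep track of which principal divisors can occur. Since $X$ is irreducible, restriction identifies $\Ocal(U)$ with a subring of $k(X)$, so any $u\in\Ocal(U)^*$ is a rational function on $X$ that is invertible along $U$. First I would observe that its principal divisor $div(u)$, computed on the normal variety $X$, is supported on $X\setminus U$: if a prime divisor $D$ meets $U$, then $u$ and $u^{-1}$ are regular at the generic point of $D$, whence $\ord_D(u)=0$. As only codimension one subvarieties contribute to $div(u)$, and every prime divisor contained in $X\setminus U$ must coincide with one of its codimension one components, we obtain $div(u)=\sum_i c_i Z_i$ with $c_i=\ord_{Z_i}(u)\in\Z$. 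Recalling from Convention \ref{generalsituation} that $div(g_i)=n_iZ_i$, the elements $g_i$ then supply exactly the building blocks needed to match such divisors, up to the multiplicities $n_i$.

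For injectivity, suppose $F(\lambda,a_1,\dots,a_q)=\lambda\cdot g_1^{a_1}\cdots g_q^{a_q}|_U=1$ in $\Ocal(U)^*$. Since $U$ is dense this is an equality of rational functions on $X$, so taking divisors and using $div(\lambda)=0$ (a unit) yields $\sum_i a_i n_i Z_i=0$. The prime divisors $Z_1,\dots,Z_q$ are distinct, hence linearly independent in the free abelian group of divisors, and as each $n_i\geq 1$ this forces all $a_i=0$, and then $\lambda=1$. Thus $F$ is injective.

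For the torsion cokernel, given $u\in\Ocal(U)^*$ with $div(u)=\sum_i c_iZ_i$, I would set $N=\lcm(n_1,\dots,n_q)$ and $a_i=Nc_i/n_i\in\Z$, so that $div(u^N)=\sum_i N c_i Z_i=div(g_1^{a_1}\cdots g_q^{a_q})$ and the quotient $f=u^N/(g_1^{a_1}\cdots g_q^{a_q})\in k(X)$ has trivial divisor. Here normality of $A$ enters: since $A=\bigcap_{\mathrm{ht}\,\p=1}A_\p$, both $f$ and $f^{-1}$ lie in $A$, so $f=\lambda\in A^*$ and $u^N=F(\lambda,a_1,\dots,a_q)$ is in the image. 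Hence the cokernel is killed by $N$. Tensoring the resulting exact sequence with $\Q$ annihilates this cokernel and gives $(A^*)_\Q\times\Q^q\simeq(\Ocal(U)^*)_\Q$; and because $F$ carries the constants $k^*\times 0$ onto the constants in $\Ocal(U)^*$, passing to the quotient by $(k^*)_\Q$ on both sides produces the asserted isomorphism $E(X)_\Q\times\Q^q\to E(U)_\Q$.

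Finally, in the UFD case every $n_i=1$, so $N=1$ already works: for any $u$ one takes $a_i=c_i$ directly and gets $u=\lambda\cdot g_1^{c_1}\cdots g_q^{c_q}|_U$, making $F$ surjective and hence an isomorphism; dividing out $k^*$ then gives $E(X)\times\Z^q\simeq E(U)$. The only genuinely delicate step is the identification of trivial-divisor rational functions with units of $A$, which is precisely where normality is used; the multiplicities $n_i$, present because $A$ is merely almost factorial rather than factorial, are what obstruct surjectivity in general and force the passage to powers, and therefore to $\Q$-coefficients.
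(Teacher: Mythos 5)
Your proof is correct, but it takes a genuinely different route from the paper's. The paper argues entirely ring-theoretically inside $A$: injectivity is proved by evaluating at a point of $Z_1$ away from the other components, and surjectivity up to torsion is proved by first using the Hartogs lemma to reduce to the case where $X\setminus U$ has pure codimension one (so $U=D(g_1\cdots g_q)$), then writing an invertible $g\in\Ocal(U)$ and its inverse as fractions over powers of $g_1\cdots g_q$, and finally invoking the uniqueness of associated primes in primary decompositions (Lemma \ref{L:uniquenessassprimes}, the dedicated preliminary of section \ref{SS:afds}) to conclude that a power of the numerator is, up to units, a product of the $g_i$. You instead work divisorially on the normal variety $X$: you observe that $div(u)$ for $u\in\Ocal(U)^*$ is supported on the $Z_i$, match it against $div(g_i)=n_iZ_i$, and use the algebraic Hartogs principle $A=\bigcap_{\mathrm{ht}\,\p=1}A_\p$ to identify rational functions of trivial divisor with units of $A$. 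This bypasses Lemma \ref{L:uniquenessassprimes} entirely, and it yields something slightly sharper than the stated conclusion: the cokernel of $F$ is annihilated by the single uniform exponent $N=\lcm(n_1,\ldots,n_q)$, rather than merely being torsion, with the torsion visibly traced to the multiplicities $n_i$ (i.e., to the torsion of $Cl(X)$). What the paper's approach buys is self-containedness within Storch's elementary AFD toolkit, with no appeal to order functions or divisor theory; what yours buys is brevity, a cleaner conceptual picture (it is essentially the excision computation relating units and class groups of $U$ and $X$), and the explicit annihilator $N$. All the delicate points in your write-up check out: prime divisors contained in $X\setminus U$ necessarily coincide with the codimension one components $Z_i$, injectivity of $F\otimes\Q$ follows from flatness of $\Q$, and the descent from $\Ocal(\cdot)^*$ to $E(\cdot)$ is justified since $F$ restricts to the identity on $k^*$.
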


\begin{proof}
First assume that
$$1=\lambda\cdot g_1^{a_1}\cdot\ldots\cdot g_q^{a_q}$$
on $U$ for some $(a_1,\ldots,a_q)\in\Z^q$, and $\lambda\in A^*$ on $U$. By removing factors with $a_i=0$ and bringing factors with $a_i<0$ to the other side, we can assume without loss of generality
$$g_1^{a_1}\cdot \ldots \cdot g_s^{a_s}=\lambda\cdot g_{s+1}^{a_{s+1}}\cdot\ldots\cdot g_r^{a_r}$$
for some $a_i>0$ globally on $X$.\\
If we assume that this equation is non-trivial, which is $r\neq 0\neq s$, there exists a point $x\in Z_1\setminus (Z_{s+1}\cup\ldots\cup Z_r)$. But then we obtain the contradiction
$$0=g_1(x)^{a_1}\cdot\ldots\cdot g_s(x)^{a_s}=\lambda(x)\cdot g_{s+1}(x)^{a_{s+1}}\cdot\ldots\cdot g_r(x)^{a_r}\neq 0.$$
Thus all $a_i$ have to vanish, which also implies $\lambda=1$.\\
This proves the injectivity of the map $F$.\\

To prove surjectivity (up to torsion in the case where $A$ is an AFD), we may assume without loss of generality that $X\setminus U$ is of pure codimension one using the Hartogs lemma \ref{L:hartogs}.\\

Given $g,h\in\Ocal(U)=\Ocal(D(g_1\cdot\ldots\cdot g_q))$, which are inverse to each other, we may write
$$g=a_g/(g_1\cdot\ldots\cdot g_q)^{s_g},~ h=a_h/(g_1\cdot\ldots\cdot g_q)^{s_h}$$
for some $a_g,a_h\in A$ and $s_g,s_h\in \N$, and hence
$$(g_1\cdot \ldots \cdot g_q)^{s_g+s_h}=a_ga_h.$$

First assume that $Cl(X)=0$. Then, since the $g_i$ are irreducible in the unique factorization domain $A$, both $a_g$ and $a_h$ are products of multiples of the $g_i$ up to invertible elements, and are hence contained in the image of $F$, so $g$ is in the image of $F$ as well.\\

For the general case we choose a power $n$ such that there exist decompositions $a_g^n=x_1\cdot\ldots\cdot x_s$ and $a_h^n=y_1\cdot\ldots\cdot y_t$ into primary elements. Clearly
$$(g_1\cdot \ldots\cdot g_q)^{n(s_g+s_h)}=a_g^na_h^n=x_1\cdot\ldots\cdot x_sy_1\cdot\ldots\cdot y_t$$
are two decompositions of $a_g^na_h^n$ into primary elements. By uniqueness of the associated prime ideals in a primary decomposition (Lemma \ref{L:uniquenessassprimes}), the prime ideals associated to primary elements on the right must be contained in the family $\p_1,\ldots,\p_r$. Thus, up to units each of the elements $x_i$ is a multiple of a primary element $z_i$ such that one of the $g_j$ is a multiple of $z_i$. Again taking multiples if necessary, this implies that some power of $a_g$ is up to units a product of the $g_i$, which implies that $g^N$ is in the image of $F$ for some power.
\end{proof}

Now we introduce a group action. Note that any geometric quotient of the form as below is given as a GIT quotient for a character in the interior of a GIT chamber by Remark \ref{R:allquotientsgit}.

\begin{conv}\label{groupsituation}
Given the situation of Convention \ref{generalsituation}, we assume furthermore that a reductive group $G$ acts on $X$ such that $U$ is $G$-invariant.\\
Later on, we will further require that there exists a geometric quotient
$$q:U\to Y,$$
and that the stabilizers $G_x$ for $x\in U$ are finite. Except for in Proposition \ref{P:picardnumber}, we additionally assume that $Y$ is projective, and hence $E(Y)=0$. All GIT quotients of $X$ are required to be projective, which is implied by $\Ocal(X)^G=A^G=k$.
\end{conv}

We recall that the vanishing order of $g_i$ on $Z_i=N(h_i)$ is denoted by $n_i$.

\begin{lemma}\label{L:actiononZ}
In the situation of Convention \ref{groupsituation} the following holds.
\begin{enumerate}
\item The action of $G$ on $X$ induces an action of $G$ on $\Zcal$, and we denote the set of orbits as
$$\Zcal/G=\{B_1,\ldots,B_r\}.$$
If $G$ is connected, this action is trivial.
\item Suppose that $N(h)\subset X$ is $G$-invariant for some nonzero nonunit $h\in A$. We further require that in the primary decomposition
$$h^n=h_1\cdot\ldots\cdot h_s$$
the vanishing orders of $h_i$ and $h_j$ on their respective nullstellensets $N(h_i)$ and $N(h_j)$ agree, whenever $N(h_i)$ and $N(h_j)$ have the same orbit under the action of $G$ on $\Zcal(D(h))$, and that the prime ideals associated to $h_i$ and $h_j$ differ when $i\neq j$. Then $h^n$ is a semiinvariant with respect to a uniquely determined character. Conversely, all semiinvariants are of such a form.
\item For any orbit $B_i\in\Zcal/G$ the function
$$f_i=\prod_{Z_j\in B_i}g_j^{N_{ij}},$$
where $N_{ij}=\lcm(n_k|Z_k\in B_i)/n_j$, satisfies the assertions of (2), so $g*f_i=\chi_i(g)\cdot f_i$ for a uniquely determined character $\chi_i\in\chi(G)$.
\item The isomorphism $F:E(X)_\Q\times \Q^q\to E(U)_\Q$ given in Lemma \ref{L:invertiblefunctions} induces an isomorphism
$$E(X)^G_\Q\times \Q^{r}\to E(U)^G_\Q.$$
If $A$ is a UFD, the same statement is true with coefficients in $\Z$.
\end{enumerate}
\end{lemma}

\begin{proof}
To prove the first assertion observe that if $G'\subset G$ is a connected component, and $Z\in\Zcal$, then $G'*Z\subset X\setminus U$ is irreducible and hence contained in some $Z'\in \Zcal$. By equality of dimensions we have $G'*Z=Z'$. It is easy to see that this defines an action of $G/G^0$ on $\Zcal$, and hence an action of $G$.\\

For the second assertion we need some preparations.\\
By the first assertion $G$ acts on $\Zcal(D(h))=\{N(h_1),\ldots,N(h_s)\}$, and we denote by $\pi_g$ the permutation of indices associated to an element $g\in G$. It is immediate to verify that $N(g*h_i)=N(h_{\pi_g(i)})$, so
$$g*h_i=\lambda z^k ~\mathrm{and}~ h_{\pi_g(i)}=\lambda' z^l$$
are multiples of a third primary element $z\in A$ up to units $\lambda,\lambda'\in A^*$.\\
On the other hand, $g$ can be interpreted as an automorphism of $A$, and hence of $X$, which induces an isomorphism of local rings
$$\Ocal_{X,\xi_i}\to \Ocal_{X,\xi_{\pi_g(i)}},$$
where $\xi_i$ and $\xi_j$ are the generic points of $N(h_i)$ and $N(h_j)$ in the affine scheme $X$. So $\ord_{N(h_i)}(h_i)=\ord_{N(h_{\pi_g(i)})}(g*h_i)$, from which we deduce $k=l$. In other words, $g$ permutes the elements $h_i$ up to units, so that the equation
$$g*h_1\cdot\ldots\cdot h_s=\lambda(g)\cdot h_1\cdot\ldots\cdot h_s$$
holds for a unit $\lambda(g)\in A$. A suitable power $g^p$ is contained in the connected component $G^0$ of the identity element, hence the class of $h^n|_{D(h)}$ in $E(D(h))=E(D(h))^{G^0}$ (the equality holds by \cite{kkv} Proposition 1.3) is $g^p$-invariant. We conclude that $\lambda(g)^p\in \G_m$, so $\lambda(g)$ is a constant function on $D(h)$, and hence on $X$, which takes value in $k^*$. Finally, it is easy to see that the assignment $g\mapsto \lambda(g)\in \G_m$ defines a character of $G$.\\
Conversely, $h^n$ and $g*h^n=\chi(g)\cdot h^n$ have the same vanishing order on any prime divisor, and the claim follows by the above computations.\\

Consider the map $F':E(X)^G\times \Z^{r}\to E(U)$ given by
$$(\lambda,m_1,\ldots,m_r)\mapsto \lambda\cdot f_1^{m_1}\cdot\ldots \cdot f_r^{m_r},$$
where the $f_i$ are associated to orbits as in the third assertion. By the second assertion and Lemma \ref{L:invertiblefunctions} this map gives the fourth assertion.
\end{proof}

From now on we will continue using the notation as it is introduced in Lemma \ref{L:actiononZ}. In particular, let $r$ denote the number of orbits of $\Zcal(U)$ under the action of $G$.

\begin{prop} \label{P:picardnumber}
In the situation of Convention \ref{groupsituation} there is an up to torsion exact sequence
$$0\to E(Y)\to E(X)^G\times \Z^{r} \to \chi(G)\to Pic(Y)\to 0,$$
which is that this sequence becomes exact after tensoring with $\Q$. In particular, the formula
$$\rho(Y)=rk(\chi(G))-(|\Zcal/G|+rk(E(X)^G))+rk(E(Y))$$
for the Picard number holds. If the stabilizers for points on $U$ are trivial, $A$ is a UFD, and if furthermore $G$ is connected, then the above sequence is exact by itself.
\end{prop}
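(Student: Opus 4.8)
The plan is to obtain the stated four-term sequence from the exact sequence of Knop--Kraft--Vust (\cite{kkv}) for the equivariant Picard group of $U$, and then to translate it into the asserted form by means of two identifications. The intermediate sequence I would first establish is
$$0\to E(Y)\to E(U)^G\xrightarrow{w}\chi(G)\to Pic^G(U)\to 0,$$
where $w$ sends the class of a semiinvariant unit to its character. Afterwards I substitute, on the one hand, the isomorphism $E(U)^G_\Q\cong E(X)^G_\Q\times\Q^r$ of Lemma \ref{L:actiononZ}(4) (under which the $\Q^r$-factor is spanned by the classes of the semiinvariants $f_i$, so that $w$ becomes $(\lambda,m_1,\ldots,m_r)\mapsto\sum_i m_i\chi_i$), and, on the other hand, the descent isomorphism $q^*\colon Pic(Y)_\Q\xrightarrow{\sim}Pic^G(U)_\Q$. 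These two replacements turn the intermediate sequence into the sequence of the proposition, up to torsion.

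The four maps and the exactness I would check as follows. The first map is the pullback $q^*\colon E(Y)\to E(U)^G$; it is injective because $q$ is surjective, and its image consists exactly of the classes of trivial character, since a genuinely $G$-invariant unit on $U$ descends to a unit on $Y$ (as $\Ocal(Y)=\Ocal(U)^G$). This gives exactness at $E(Y)$ and, using the explicit description of $w$ above, at $E(U)^G$: the kernel of $w$ consists of those $\lambda\prod f_i^{m_i}$ with $\sum_i m_i\chi_i=0$, which are precisely the genuine invariants. For exactness at $\chi(G)$ I use that $L_\chi|_U$ is $G$-equivariantly trivial if and only if it admits a nowhere-vanishing invariant section, that is, a semiinvariant unit of character $\chi$; hence the kernel of $\chi\mapsto[L_\chi|_U]$ is exactly the image of $w$.

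For the surjectivity at the right end I would argue that $\chi(G)_\Q\to Pic^G(U)_\Q$ is onto. Since $A$ is almost factorial, $Cl(X)$ is torsion, hence so is $Cl(U)$ (a quotient of $Cl(X)$), and therefore $Pic(U)_\Q=0$; by the Hartogs Lemma \ref{L:hartogs} I may first reduce to the case where $X\setminus U$ has pure codimension one. Consequently the forgetful map $Pic^G(U)\to Pic(U)$ is rationally zero, so every class becomes, after a power, a linearization of the trivial bundle, and such linearizations are generated by the $L_\chi$ up to torsion by \cite{kkv}. Combined with the descent isomorphism this yields surjectivity of $\psi$. The Picard-number formula then follows by taking alternating dimensions in the rational sequence, giving $\rho(Y)=rk(\chi(G))-(|\Zcal/G|+rk(E(X)^G))+rk(E(Y))$, since $r=|\Zcal/G|$.

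Finally, for the integral statement I would invoke the three hypotheses exactly where the torsion was introduced: when $A$ is a UFD, Lemma \ref{L:actiononZ}(4) and the vanishing $Pic(U)=0$ hold integrally; when the stabilizers are trivial, $q$ is a principal $G$-bundle and Kempf's descent lemma gives $Pic(Y)\cong Pic^G(U)$ integrally, with no powers needed; and when $G$ is connected, the kernel of the forgetful map is exactly the image of $\chi(G)$, removing the $G/G^0$-torsion. The main obstacle I anticipate is precisely this right-hand portion: identifying $Pic^G(U)$ with $Pic(Y)$ and controlling the linearizations of the trivial bundle. The descent step rests on Kempf's lemma and forces one to pass to powers in the presence of finite rather than trivial stabilizers, which is the structural reason the sequence is only exact up to torsion in general; the vanishing $Pic(U)_\Q=0$ and the \cite{kkv} description of trivial-bundle linearizations are the other key inputs.
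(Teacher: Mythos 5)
Your proposal follows essentially the same route as the paper: both derive the four-term sequence from Proposition 5.1 of \cite{kkv} applied to $q:U\to Y$, kill the same three torsion sources (torsion $Cl(X)$ forcing $Pic(U)_\Q=0$, finite stabilizers controlling the cokernel of $q^*:Pic(Y)\to Pic^G(U)$, and finiteness of $G/G^0$ controlling $H^1(G/G^0,E(U))$), and then substitute the isomorphism $E(U)^G_\Q\simeq E(X)^G_\Q\times \Q^r$ of Lemma \ref{L:actiononZ}(4). One cosmetic correction: under that splitting the character map $w$ is $(\lambda,m_1,\ldots,m_r)\mapsto \chi_\lambda+\sum_i m_i\chi_i$, since elements of $E(X)^G$ are semiinvariant (not necessarily invariant) units, but this does not affect your exactness or alternating-rank computation.
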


\begin{proof}
We want to apply Proposition 5.1 of \cite{kkv} to our situation. For the remainder of the proof we adopt the notation given there, except that we replace $X$ by $U$.\\
Because $U$ is open in $X$ the group
$$Pic(U)\hookrightarrow Cl(U)\hookrightarrow Cl(X)$$
is torsion, and vanishes if $A$ is a UFD. The cokernel of
$$q^*:Pic(Y)\to Pic^G(U)$$
is a subgroup
$$coker(q^*)\subset \prod_{x\in\mathcal{C}}\chi(G_x),$$
where $\mathcal{C}$ is a finite set of points representing closed orbits. To show that this group is finite, it thus suffices to prove that each $\chi(G_x)$ is finite, which is true since the $G_x$ are finite by assumption.\\
Furthermore, because the group $G/G^0$ is finite, the group $H^1(G/G^0,E(U))$ is torsion (consider \cite{weibel} Theorem 6.5.8), and vanishes if $G$ is connected.\\

Up to torsion the diagram thus reduces to an exact sequence
$$0\to E(Y)\to E(U)^G\to \chi(G)\to Pic(Y)\to 0,$$
and the claim is implied by Lemma \ref{L:actiononZ}.
\end{proof}

\begin{defin}
The last map in the exact sequence of Proposition \ref{P:picardnumber} induces a map
$$\psi:\chi(G)_\Q\to Pic(Y)_\Q,$$
given by sending a character to the descent of the trivial line bundle $L$ linearized by that character.\\
By Lemma \ref{L:actiononZ} this map is surjective and the kernel is spanned by the characters $\chi_i\in\chi(G)$ such that $g*f_i=\chi_i(g)\cdot f_i$ for $B_i\in \Zcal/G$ and characters $\chi'\in\chi(G)$ such that $g*f=\chi'\cdot f$ for $f\in E(X)^G$.
\end{defin}

It is easy to see that multiplication with $\chi'$ corresponding to an invertible regular function does not change the semistable locus. This is not true in general for the characters $\chi_i$ associated to $B_i\in\Zcal/G$.

\begin{defin}
A GIT class $C$ is called a stable class with respect to $U$ if $\chi\cdot \chi_1^{m_1}\cdot \ldots \cdot \chi_r^{m_r}$ remains in $C$ for every choice $m_1,\ldots,m_r\geq 0$ and every character $\chi\in C$, where the $\chi_i$ are given as in Lemma \ref{L:actiononZ}.
\end{defin}

The $\chi_i$ as in Lemma \ref{L:actiononZ} can be interpreted as directions in the character group. Because there are only finitely many GIT classes, a ray starting in any character will stay in some fixed class for large distances, and the content of the following lemma is that the rays in the directions of the $\chi_i$ satisfy this simultaneously.

\begin{lemma}\label{L:lifting}
Using the notation of Convention \ref{groupsituation} choose an arbitrary character $\chi\in\chi(G)$. Then after replacing $\chi$ by a suitable multiple there is an isomorphism of section rings
$$R(X,L_{\chi \chi_1^{m_1}\ldots \chi_r^{m_r}})^G \to R(U,L_\chi)^G,$$
given in degree $n$ as
$$s\mapsto \frac{1}{f_1^{nm_1}\cdot\ldots\cdot f_r^{nm_r}}s|_U,$$
if we take the $m_i=m_i(\chi)$ suitably large.\\

In particular, $\chi\cdot \chi_1^{m_1}\cdot \ldots \cdot \chi_r^{m_r}$ is contained in a stable class, for $m_i$ large enough.
\end{lemma}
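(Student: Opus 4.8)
The plan is to produce the claimed isomorphism of section rings by multiplying homogeneous sections by the appropriate power of the semiinvariant $f_1^{m_1}\cdots f_r^{m_r}$, and then to deduce the ``stable class'' statement from the resulting identification of semistable loci. The map
$$s\mapsto \frac{1}{f_1^{nm_1}\cdots f_r^{nm_r}}s|_U$$
sends a $G$-semiinvariant of weight $\chi\chi_1^{m_1}\cdots\chi_r^{m_r}$ in degree $n$ to a $G$-invariant object of weight $\chi$ on $U$, because by Lemma \ref{L:actiononZ}(3) we have $g*f_i=\chi_i(g)\cdot f_i$, so the division by $f_i^{nm_i}$ exactly cancels the $\chi_i^{nm_i}$ part of the weight. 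The content is to check that for $m_i$ large the division never introduces a pole outside $X\setminus U$, so that the image is a genuine section of $L_\chi$ over $U$, and that the map is a bijection in each degree.

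First I would establish surjectivity/well-definedness of the map. A section $t\in H^0(U,L_\chi)^G$ in degree $n$ is a semiinvariant regular function on $U=D(g_1\cdots g_q)$, so by Lemma \ref{L:invertiblefunctions} (applied after clearing denominators against the $g_j$, equivalently against the $f_i$) a high enough power of $f_1\cdots f_r$ times $t$ extends to a global regular semiinvariant on $X$; that global object is the preimage in $R(X,L_{\chi\chi_1^{m_1}\cdots\chi_r^{m_r}})^G$. The key point is that a \emph{uniform} choice of exponents $m_i$ works across all degrees $n$: because $X$ is the spectrum of a Noetherian ring the invariant ring $R(X,-)^G$ is finitely generated, so it suffices to arrange the clearing of denominators on a finite set of algebra generators, after which one passes to a common multiple of $\chi$ as permitted by the statement (``after replacing $\chi$ by a suitable multiple''). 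Injectivity is immediate, since multiplication by the nonzero element $f_1^{nm_1}\cdots f_r^{nm_r}$ is injective in the domain $A$, and the inverse of the map is division's inverse, namely multiplication by the same factor followed by restriction.

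For the final assertion I would argue that an isomorphism of section rings $R(X,L_{\chi'})^G\cong R(U,L_\chi)^G$ compatible with the grading identifies $\mathrm{Proj}$ of the two sides, and in particular shows that the semistable locus of $\chi'=\chi\chi_1^{m_1}\cdots\chi_r^{m_r}$ inside $X$ agrees with the semistable locus computed on $U$; the decomposition $D(f)=X\setminus N(f)$ defining semistability is governed by exactly the invariant functions appearing in these rings. Thus for all large $m_i$ the semistable locus stabilizes, which is precisely the statement that $\chi\chi_1^{m_1}\cdots\chi_r^{m_r}$ lands in a single GIT class and that further increments by nonnegative multiples of the $\chi_i$ keep it there, i.e.\ the class is stable in the sense of the preceding definition. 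The main obstacle I anticipate is the uniformity of the exponents $m_i$ in $n$: a naive degreewise argument gives exponents growing with $n$, and the real work is to bound them simultaneously, which is where finite generation of the invariant ring (and hence reducing to finitely many generators) does the essential job.
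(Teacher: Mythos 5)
Your proposal follows the same skeleton as the paper's own proof: injectivity from integrality of $A$; degreewise surjectivity by clearing denominators against the $g_j$ (equivalently the $f_i$), using that sections over $U$ extend after multiplication by enough of these, which rests on Hartogs and normality exactly as you indicate via Lemma \ref{L:invertiblefunctions}; and uniformity of the exponents $m_i$ by reducing to finitely many algebra generators, placed in degree one after replacing $\chi$ by a multiple (the paper's ``thinning''). Two small inaccuracies: well-definedness needs no largeness at all, since each $g_j$, hence each $f_i$, is a \emph{unit} on $U$ (its zero set $Z_j$ lies in $X\setminus U$), so the division never creates poles on $U$ for any $m_i\geq 0$ --- largeness is needed only for surjectivity; and for the final assertion, an isomorphism of section rings identifies the $\mathrm{Proj}$'s but does not by itself identify semistable loci --- what makes the stable-class conclusion work (as in Corollary \ref{C:locusstable}) is that for large $m_i$ every semiinvariant of the shifted character is divisible by $f_1,\ldots,f_r$, so the $Z_j$ are unstable and further multiplication by the $\chi_i$ leaves the semistable set unchanged. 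The paper is itself terse on this last point, so I weigh it lightly.

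The one genuine gap is the justification of the finite generation on which your uniformity argument rests --- the step you yourself flag as ``the real work.'' You attribute it to $X$ being the spectrum of a Noetherian ring, but Noetherianity alone never yields finite generation of invariant rings; reductivity of $G$ via the Theorem of Hilbert--Nagata is essential. More importantly, the ring that must be finitely generated is the \emph{target} $R(U,L_\chi)^G$, a ring of invariant sections over the non-affine open set $U$, which is not on its face the invariant ring of a finitely generated algebra, whereas your citation of ``$R(X,-)^G$'' concerns section rings over the affine $X$. The paper closes exactly this hole by assembling the full multigraded ring $\bigoplus_{n,m_1,\ldots,m_r} H^0(X,L_{\chi^n\chi_1^{m_1}\cdots\chi_r^{m_r}})^G$, identifying it --- using the linear independence of the $\chi_i$, which comes from Proposition \ref{P:picardnumber} and Lemma \ref{L:actiononZ}(4) --- with $A[S,T_1,\ldots,T_r]^G$, which is finitely generated by Hilbert--Nagata, and then transporting finite generation along the graded surjection $\phi$ onto $\bigoplus_n H^0(U,L_{\chi^n})^G$. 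Alternatively, your argument could be repaired by observing that $\Ocal(U)=A_{g_1\cdots g_q}$ (Hartogs on the normal $X$, together with the $G$-invariance of $D(g_1\cdots g_q)$), so that $R(U,L_\chi)^G\simeq \left(A_{g_1\cdots g_q}[S]\right)^G$ is finitely generated by Hilbert--Nagata; but as written the finite generation is asserted, with the wrong reason and for the wrong ring, rather than proved.
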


\begin{proof}
Injectivity is obvious. Consider the map
$$\phi:\bigoplus_{m_1,\ldots,m_r,n\geq 0}H^0(X,L_{\chi^n\chi_1^{m_1}\ldots\chi_r^{m_r}})^G\to \bigoplus_{n\geq 0} H^0(U,L_{\chi^n})^G,$$
given on homogenous elements in a similar form as the map in the statement of the lemma. Note that $\phi$ is graded with respect to the $n$-gradings, and surjective since every section in $H^0(U,L_\chi)^G$ can be lifted to a global section after multiplication with sufficiently many $g_i$.\\
Note that the characters $\chi_i$ are linearly independent by the sequence in Lemma \ref{P:picardnumber} and the isomorphism in Lemma \ref{L:actiononZ}.(4), so that the left hand side is isomorphic to
$$H^0(X,L)[S,T_1,T_2,\ldots,T_r]^G=A[S,T_1,T_2,\ldots,T_r]^G,$$
where the action of $G$ is given as
$$g*S=\chi(g)^{-1}S,~g*T_i=\chi_i(g)^{-1}T_i,$$
and the action on scalars is inherited from the action of $G$ on $A$. This fixed point algebra is finitely generated as a $k$-algebra by the Theorem of Hilbert-Nagata, so the right hand side is finitely generated as well. Of course it is sufficient to lift the finitely many generators, which after taking a suitable thinning of the section ring can be assumed to all live in degree 1. Taking fixed powers $m_1,\ldots,m_r$ which lift all generators thus gives a surjective map as claimed. 
\end{proof}

The following useful observation can be deduced from Lemma \ref{L:lifting}.

\begin{cor}\label{C:locusstable}
If $\chi\in \chi(G)$ is contained in a stable class, then
$$X^{\chi-sst}\cap U=X^{\chi-sst}$$
in codimension one.
\end{cor}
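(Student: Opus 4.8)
The plan is to prove the stronger statement that $X^{\chi-sst}$ misses every divisor $Z_i\in\Zcal$ altogether. Since $X^{\chi-sst}$ is open and each $Z_i$ is irreducible of codimension one, the intersection $X^{\chi-sst}\cap Z_i$ is open in $Z_i$, hence either empty or dense of codimension one; thus avoiding the $Z_i$ is equivalent to the assertion, because the remaining components of $X\setminus U$ have codimension at least two and contribute nothing in codimension one. First I would replace $\chi$ by a suitable multiple and fix exponents $m_1,\ldots,m_r$ as in Lemma \ref{L:lifting}, setting $\chi'=\chi\chi_1^{m_1}\cdots\chi_r^{m_r}$. Since $\chi$ lies in a stable class, $\chi'$ lies in the same class, so $X^{\chi-sst}=X^{\chi'-sst}$ and it suffices to treat $\chi'$.

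Recall that $X^{\chi'-sst}=\bigcup D(s)$, the union over homogeneous $s\in R(X,L_{\chi'})^G$ of positive degree (each $D(s)$ being a principal affine open, as $L$ is trivial). Hence it is enough to show that every such $s$ vanishes along each $Z_i$, i.e.\ lies in the prime $\p_i$ associated to $g_i$ in Convention \ref{generalsituation}; then $D(s)\cap Z_i=\emptyset$ for all $s$ and the union misses $Z_i$. By the finite generation established in the proof of Lemma \ref{L:lifting}, I may assume after thinning that $R(U,L_\chi)^G$ is generated in degree one by finitely many $t_1,\ldots,t_l$. Via the Hartogs lemma \ref{L:hartogs} one identifies $\Ocal(U)=A_{g_1\cdots g_q}$, so each $t_\lambda$ has bounded pole order along the $Z_i$; letting $M=\max_\lambda M_\lambda$, where $t_\lambda=a_\lambda/(g_1\cdots g_q)^{M_\lambda}$ with $a_\lambda\in A$, a degree-$n$ element of $R(U,L_\chi)^G$ satisfies $\ord_{Z_i}(t)\ge -nMn_i$, using $\ord_{Z_i}(g_k)=n_i\delta_{ik}$ coming from $n_iZ_i=\mathrm{div}(g_i)$.

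Now the isomorphism of Lemma \ref{L:lifting} sends a degree-$n$ element $s$ to $t=s|_U/(f_1^{nm_1}\cdots f_r^{nm_r})$, so $s|_U=t\cdot f_1^{nm_1}\cdots f_r^{nm_r}$ as an identity in the function field, $U$ being dense. Each $f_j=\prod_{Z_k\in B_j}g_k^{N_{jk}}$ is a product of the $g_k$ in the orbit $B_j$ with positive exponents, so if $B_{j(i)}$ denotes the orbit of $Z_i$ then $\ord_{Z_i}(f_1^{nm_1}\cdots f_r^{nm_r})=nm_{j(i)}N_{j(i),i}n_i\ge nm_{j(i)}n_i$. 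Combining with the pole bound gives $\ord_{Z_i}(s)\ge n\,n_i\,(m_{j(i)}-M)$. As the $t_\lambda$, and hence $M$, are fixed independently of the $m_j$, I may enlarge the $m_j$ — which preserves both the surjectivity of Lemma \ref{L:lifting} and membership of $\chi'$ in the stable class — so that $m_{j(i)}>M$ for all $i$ simultaneously. Then $\ord_{Z_i}(s)>0$ for every homogeneous $s$ of positive degree and every $i$, which is exactly the vanishing $s\in\p_i$ we need.

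The main obstacle is the uniformity across all degrees $n$ with a single choice of the $m_j$: pole orders of arbitrary sections grow with $n$, so one must verify that the powers of $g_i$ supplied by $f_1^{nm_1}\cdots f_r^{nm_r}$ outgrow them. This succeeds precisely because both quantities are linear in $n$, with the slope on the $f$-side governed by the freely chosen $m_j$ and the slope on the pole side governed by the fixed bound $M$ from a finite generating set. The only further points requiring care are the identification $\Ocal(U)=A_{g_1\cdots g_q}$ via Hartogs and the order bookkeeping along the $Z_i$; both are routine given Convention \ref{generalsituation} and Lemma \ref{L:actiononZ}.
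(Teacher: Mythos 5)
Your argument is correct, and it shares the paper's skeleton --- reduce to showing that each $Z_i$ is unstable, then use Lemma \ref{L:lifting} together with the defining property of a stable class to replace $\chi$ by $\chi\chi_1^{m_1}\cdots\chi_r^{m_r}$ without changing the semistable locus --- but the key divisibility step is carried out by a genuinely different device. The paper plays two instances of Lemma \ref{L:lifting} against each other: the isomorphisms onto $H^0(U,L_\chi)^G$ for exponents $(m_1,\ldots,m_r)$ and $(m_1+1,\ldots,m_r+1)$ show immediately that every semiinvariant for the shifted character is divisible by $f_1,\ldots,f_r$, hence vanishes along every $Z_i$, with no valuation estimates, pole bounds, or choice of generating set entering at all. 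You instead prove the divisibility quantitatively: identify $\Ocal(U)=A_{g_1\cdots g_q}$ via the Hartogs lemma \ref{L:hartogs} (exactly as in the proof of Lemma \ref{L:invertiblefunctions}), bound the pole orders of a degree-one generating set of $R(U,L_\chi)^G$ by a constant $M$, observe that both the pole order and $\ord_{Z_i}(f_1^{nm_1}\cdots f_r^{nm_r})$ are linear in the degree $n$, and take $m_{j(i)}>M$. Your bookkeeping is sound ($\ord_{Z_i}(g_k)=n_i\delta_{ik}$ from $n_iZ_i=\mathrm{div}(g_i)$, and $\ord_{Z_i}(f_{j(i)})=N_{j(i),i}\,n_i\geq n_i$), the subsequent enlargement of the $m_j$ is legitimate (lifts for larger exponents are the old lifts multiplied by further global powers of the $f_j$, and the stable-class definition permits arbitrary $m_j\geq 0$), and in fact you need less than the full isomorphism of Lemma \ref{L:lifting}: only the well-definedness of $s\mapsto s|_U/(f_1^{nm_1}\cdots f_r^{nm_r})$ --- automatic since the $f_j$ are invertible semiinvariants on $U$ --- together with degree-one generation of the target ring, so your appeal to surjectivity is harmless but dispensable. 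What your route buys is an explicit, effective criterion ($m_{j(i)}>M$) and complete transparency about why the estimate is uniform over all degrees $n$; what the paper's route buys is brevity and independence from any choice of generators or order computations, since the simultaneous validity of the lifting isomorphism at $m$ and $m+1$ already encodes the divisibility.
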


\begin{proof} Clearly, the only codimension one components which could contradict the equality are of the form $Z_i$. But by Lemma \ref{L:lifting} there are isomorphisms
$$H^0(X,L_{\chi \chi_1^{m_1+1}\ldots \chi_r^{m_r+1}})^G \to H^0(U,L_\chi)^G \leftarrow H^0(X,L_{\chi \chi_1^{m_1}\ldots \chi_r^{m_r}})^G$$
for sufficiently large $m_i$, and thus we can assume without loss of generality (replacing $\chi$ by $\chi\cdot\chi_1^{m_1}\cdot\ldots\cdot \chi_r^{m_r}$) that any semiinvariant with respect to $\chi$ can be divided by $f_1,
\ldots,f_r$. So the $Z_i$ are unstable with respect to $\chi$.
\end{proof}

Another consequence of Lemma \ref{L:lifting} is the following.

\begin{prop}\label{L:isosectionrings}
For a character $\chi\in \chi(G)$ contained in a stable class, the canonical map
$$R(X,L_\chi)^G\to R(Y,\psi(\chi))$$
is an isomorphism, where we may have to replace $\chi$ by a suitable multiple.\\

In particular, the pseudoeffective cone $\Eff(Y)$ is exactly the image of the union of all stable $G$-ample classes.
\end{prop}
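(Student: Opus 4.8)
The plan is to factor the canonical map as the restriction $R(X,L_\chi)^G\to R(U,L_\chi)^G$ followed by the descent isomorphism $R(U,L_\chi)^G\cong R(Y,\psi(\chi))$. The descent is nothing but the statement that invariant sections over $U$ descend along the geometric quotient $q:U\to Y$: after replacing $\chi$ by a multiple so that the finite stabilizers act trivially on the fibres of $L_\chi$, one has $q^*\psi(\chi)\simeq L_\chi|_U$ and hence $H^0(Y,n\psi(\chi))=H^0(U,L_{\chi^n})^G$ for every $n$, which is exactly how $\psi(\chi)$ was defined. The restriction map is injective because $X$ is irreducible and $U$ is dense, so the whole assertion reduces to surjectivity of restriction, i.e.\ to extending an invariant section from $U$ across $X\setminus U$.

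For surjectivity I would first use Hartogs (Lemma \ref{L:hartogs}) to discard the part of $X\setminus U$ of codimension at least two, so that only the divisors $Z_i$ can obstruct extension. Take $t\in H^0(U,L_{\chi^n})^G$ and regard it as a semiinvariant rational function on $X$, regular off $\bigcup_i Z_i$. The almost factorial structure (Convention \ref{generalsituation} and Lemma \ref{L:actiononZ}) guarantees that multiplying by enough of the $g_j$, equivalently by the $f_i$, clears these poles: for $a_1,\dots,a_r\geq 0$ large the product $s:=t\cdot f_1^{a_1}\cdots f_r^{a_r}$ is a genuine global semiinvariant in $H^0(X,L_{\chi^n\chi_1^{a_1}\cdots\chi_r^{a_r}})^G$. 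I then choose the exponents $a_i\geq 0$ to be \emph{minimal} with this property.

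The stable-class hypothesis now closes the argument, and the divisibility step here is the one I expect to be the main obstacle. Suppose some $a_i>0$. Since $\chi$ lies in a stable class and all $\chi_i$-weights of the character $\chi^n\chi_1^{a_1}\cdots\chi_r^{a_r}$ are nonnegative, this character lies in the same stable class; by Corollary \ref{C:locusstable} each $Z_j\in B_i$ is then unstable, and the argument in its proof shows that $s$ is divisible by $f_i$. But then $s/f_i$ is again regular on all of $X$, contradicting the minimality of $a_i$. Hence every $a_i=0$, so $t=s$ already extends to $X$, proving surjectivity. The delicate point is precisely upgrading "unstable" to genuine divisibility of $s$ by $f_i$: this is where one matches the vanishing order of $s$ along each $Z_j\in B_i$ against the exponents $N_{ij}$, using that $n_jN_{ij}=\lcm(n_k\mid Z_k\in B_i)$ is independent of $j$, together with uniqueness of associated primes (Lemma \ref{L:uniquenessassprimes}).

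For the final assertion I would combine the isomorphism just established with the surjectivity of $\psi$. A class $\psi(\chi)\in Pic(Y)_\Q$ is effective up to a multiple exactly when $R(Y,\psi(\chi))$ has a section in positive degree, hence by the isomorphism exactly when $R(X,L_\chi)^G$ does, i.e.\ when $\chi$ is $G$-ample; and by the second part of Lemma \ref{L:lifting} every $G$-ample character can be translated by a product of the $\chi_i$, which lie in $\ker\psi$, into a stable class without changing its image under $\psi$. Thus the effective classes of $Y$ are exactly the $\psi$-images of stable $G$-ample classes. Finally, the union of all stable $G$-ample GIT classes is a finite union of rational polyhedral cones, hence closed, so its image under the linear surjection $\psi$ is already closed and therefore coincides with the closure of the effective cone, namely $\Eff(Y)$.
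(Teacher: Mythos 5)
Your overall factorization (restriction to $U$ followed by descent) and your treatment of the final assertion about $\Eff(Y)$ match the paper, but your core surjectivity argument contains a genuine gap at exactly the step you flag as delicate, and as written it fails. From instability of $Z_j$ with respect to the class of $\chi^n\chi_1^{a_1}\cdots\chi_r^{a_r}$ you may only conclude that the degree-one semiinvariant $s$ vanishes identically on $Z_j$, i.e.\ $\ord_{Z_j}(s)\geq 1$. Divisibility by $f_i=\prod_{Z_j\in B_i}g_j^{N_{ij}}$ requires $\ord_{Z_j}(s)\geq n_jN_{ij}=\lcm(n_k\mid Z_k\in B_i)$, which can be arbitrarily large; since $(g_j)$ is only a primary ideal, vanishing on $Z_j=N(g_j)$ is far weaker than membership in $(g_j^{N_{ij}})$. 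Moreover, the divisibility statement in the proof of Corollary \ref{C:locusstable} is obtained \emph{after} translating the character by sufficiently large powers of the $\chi_i$ via Lemma \ref{L:lifting} --- precisely what your minimality of the $a_i$ forbids, so you cannot quote "the argument in its proof" for your specific exponents. Concretely: if $B_{i_0}=\{Z_1\}$ with $n_1=3$ and $t$ has a pole of order $1$ along $Z_1$, the minimal choice is $a_{i_0}=1$ and $\ord_{Z_1}(s)=2$; then $s$ vanishes on $Z_1$, consistent with instability, yet is not divisible by $f_{i_0}$, and no contradiction with minimality arises. Your gesture toward Lemma \ref{L:uniquenessassprimes} does not supply the missing vanishing orders.

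The paper avoids this difficulty entirely: its proof is two lines, combining the descent isomorphism $q^*:R(Y,\psi(\chi))\to R(U,L_\chi)^G$ with Lemma \ref{L:lifting}, which identifies $R(U,L_\chi)^G$ with $R(X,L_{\chi\chi_1^{m_1}\cdots\chi_r^{m_r}})^G$ for large $m_i$; since the $\chi_i$ lie in $\ker\psi$ and the class of $\chi$ is stable, replacing $\chi$ by this translate (together with a multiple) is harmless, and no honest extension of sections across the $Z_i$ is ever proved or needed. Your stronger claim --- that restriction itself is surjective for $\chi$ in a stable class --- is in fact true, and your strategy can be repaired with one additional idea: note that $\mathrm{div}(t)$ is $G$-invariant because $t$ is a semiinvariant, so $c_i:=\ord_{Z_j}(t)$ is constant for $Z_j\in B_i$; if $c_{i_0}<0$, replace $t$ by $t^M$ with $M=\lcm(n_k\mid Z_k\in B_{i_0})$, so that the minimal pole-clearing $s_M=t^M\prod_i f_i^{a_i}$ has $\ord_{Z_j}(s_M)=Mc_{i_0}+a_{i_0}M=0$ exactly, for $Z_j\in B_{i_0}$; then $s_M$ does not vanish identically on $Z_j$, contradicting the instability of $Z_j$ furnished by Corollary \ref{C:locusstable}. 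It is this passage to the multiple $M$, forcing the order to be exactly zero rather than merely small, that your minimality argument is missing.
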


\begin{proof}
By definition of descent we have an induced isomorphism
$$q^*:R(Y,\psi(\chi))\to R(U,L_\chi)^G.$$
Taking suitable multiples and multiplying with suitable powers of the $\chi_i$ (the latter operation does not change $\psi(\chi)$), the right hand side is isomorphic to $R(X,L_\chi)^G$ by Lemma \ref{L:lifting}.\\

Hence the divisor $\psi(\chi)$ admits a section if and only if $L_\chi$ admits a section, which is equivalent to $X^{\chi-sst}\neq \emptyset$ up to multiples of $\chi$.
\end{proof}

Next we prove some properties of the rational maps induced by GIT (see section \ref{SS:git} for a definition). The proof relies on the fact that $X$ is the spectrum of an AFD $A$, and the author is unsure about the situation in more general cases (compare with the remark about Corollary 1.4 in \cite{thaddeus}). Recall our assumption that stability and semistability with respect to chambers conincide.\\
We remark that the following result holds (with the same proof) in the case where we replace the stable and semistable locus by open sets which admit a geometric and good quotient respectively. Equally well, this holds by an application of Remark \ref{R:allquotientsgitmds}.

\begin{lemma}\label{L:rationalmaps}
Suppose that a reductive group $G$ acts on a variety $X$, which is the spectrum of an AFD $A$. Choose any two characters $\chi,\chi'\in\chi(G)$, where $X^{\chi-st}$ is nonempty, and consider the associated rational map
$$f:Y_\chi\dasharrow Y_{\chi'}$$
induced by GIT. Then the following holds.
\begin{enumerate}
\item The domain of $f$ satisfies
$$q_\chi(X^{\chi-st}\cap X^{\chi'-sst})\subset dom(f)\subset q_\chi(X^{\chi-sst}\cap X^{\chi'-sst}).$$
\item If $\chi$ and $\chi'$ are contained in the interiors of GIT chambers, then the exceptional locus is given as
$$exc(f)=q_\chi(X^{\chi-sst}\cap X^{\chi'-unst}).$$
\end{enumerate}
\end{lemma}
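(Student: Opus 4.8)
The plan is to prove the two inclusions of part (1) separately, to deduce part (2) from part (1) using that stability and semistability coincide on chambers, and to isolate the one genuinely AFD-dependent input. The lower bound of (1) is immediate from the construction recalled in Section \ref{SS:git}: with $V=X^{\chi-st}\cap X^{\chi'-sst}$ the restriction $V\to q_\chi(V)$ is a geometric quotient onto an open subset of $Y_\chi$, and the $G$-invariant composite $V\hookrightarrow X^{\chi'-sst}\to Y_{\chi'}$ descends to a morphism $q_\chi(V)\to Y_{\chi'}$ representing $f$, so $q_\chi(X^{\chi-st}\cap X^{\chi'-sst})\subseteq dom(f)$.

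For the upper bound I would first record the key geometric input, which is where almost factoriality enters: writing $p:X\dashrightarrow Y_{\chi'}$ for the rational map extending the good quotient $q_{\chi'}$, the claim is that $dom(p)=X^{\chi'-sst}$. The inclusion $\supseteq$ holds because $q_{\chi'}$ is a morphism. For $\subseteq$ one supposes $p$ is regular at an unstable point $x_u$, with image in an affine chart $q_{\chi'}(D(s))$ of $Y_{\chi'}=Proj(R(X,L_{\chi'})^G)$ for a $\chi'$-semiinvariant $s$; then every invariant ratio $t/s^m$ of $\chi'$-semiinvariants of matching weight is regular at $x_u$. Since $s$ vanishes at $x_u$ there is a prime divisor $D\ni x_u$ with $\ord_D(s)\geq 1$, and the hard part is to produce, from the primary/divisorial description of the AFD $A$ in Convention \ref{generalsituation} and Lemma \ref{L:invertiblefunctions}, a semiinvariant $t$ and exponents with $\ord_D(t/s^m)<0$, contradicting regularity at $x_u$. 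Granting $dom(p)=X^{\chi'-sst}$, the upper bound is then formal: for $y\in dom(f)$ choose an open $W\ni y$ with a regular representative $f':W\to Y_{\chi'}$; the $G$-invariant composite $\Phi:q_\chi^{-1}(W)\to W\to Y_{\chi'}$ agrees with $p$ on the dense open $V\cap q_\chi^{-1}(W)$, so by maximality of $dom(p)$ we get $q_\chi^{-1}(W)\subseteq X^{\chi'-sst}$, whence the fibre $q_\chi^{-1}(y)$ meets $X^{\chi'-sst}$ and $y\in q_\chi(X^{\chi-sst}\cap X^{\chi'-sst})$.

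For part (2) I would specialize to $\chi,\chi'$ in chamber interiors, so $X^{\chi-sst}=X^{\chi-st}$, $X^{\chi'-sst}=X^{\chi'-st}$, and $V=X^{\chi-sst}\cap X^{\chi'-sst}$; part (1) then forces both bounds to collapse to $dom(f)=q_\chi(V)$. As $V$ lies in both stable loci, $V\to q_\chi(V)$ and $V\to q_{\chi'}(V)$ are geometric quotients with the same orbit fibres, so $f$ restricts to an isomorphism between the open subsets $q_\chi(V)\subseteq Y_\chi$ and $q_{\chi'}(V)\subseteq Y_{\chi'}$; taking $q_{\chi'}(V)$ as the neighborhood in the definition of $exc(f)$ shows that no point of $dom(f)$ is exceptional, so $exc(f)=Y_\chi\setminus dom(f)$. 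Finally, since semistability is $G$-invariant and the fibres of $q_\chi$ over $Y_\chi$ are single orbits, the $G$-invariant decomposition $X^{\chi-sst}=(X^{\chi-sst}\cap X^{\chi'-sst})\sqcup(X^{\chi-sst}\cap X^{\chi'-unst})$ descends to $Y_\chi=q_\chi(V)\sqcup q_\chi(X^{\chi-sst}\cap X^{\chi'-unst})$, giving $exc(f)=Y_\chi\setminus q_\chi(V)=q_\chi(X^{\chi-sst}\cap X^{\chi'-unst})$. The main obstacle throughout is the single claim $dom(p)=X^{\chi'-sst}$, i.e. ruling out that a ratio of $\chi'$-semiinvariants extends across an unstable point; I expect this to rest precisely on the order-of-vanishing bookkeeping along prime divisors afforded by almost factoriality (each prime divisor being $\Q$-principal via a primary element), which is exactly the ingredient that fails for general normal $X$ and underlies the author's remark comparing with \cite{thaddeus}.
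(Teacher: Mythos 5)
The lower bound of (1) and the general shape of your reduction are fine, but there is a decisive gap: you make the entire upper bound of (1) rest on the claim $dom(p)=X^{\chi'-sst}$ for the rational map $p:X\dasharrow Y_{\chi'}$ extending $q_{\chi'}$, and you do not prove it --- you explicitly ``grant'' it and defer the ``hard part'' of producing a semiinvariant ratio $t/s^m$ with a pole along a divisor through the unstable point. Worse, the claim is false, and the paper's own first example in its Examples section is a counterexample: for $G=\G_m\times\G_m$ acting on $X=\A^1\times\A^2$ by $(g,h)*(x,y)=(gx,hy)$, with $\chi'$ in the unique chamber, one has $X^{\chi'-sst}=(\A^1\setminus 0)\times(\A^2\setminus 0)$ and $Y_{\chi'}=\P^1$, while $p(x,y)=[y_1:y_2]$ is regular on the strictly larger set $\A^1\times(\A^2\setminus 0)$. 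Indeed every semiinvariant of weight a multiple of $\chi'$ has the form $x^{a}P(y)$ with $P$ homogeneous, so every invariant ratio $t/s^m$ equals $P(y)/Q(y)^m$ and is regular across the codimension-one unstable divisor $\{x=0\}$; no $t$ with $\ord_D(t/s^m)<0$ exists. So the order-of-vanishing bookkeeping you hope to extract from almost factoriality cannot succeed: regularity of the quotient map at a point does not detect instability, precisely because of codimension-one unstable components --- the phenomenon this paper is written to handle (when they are absent, the Hu--Keel codimension-two setting applies anyway). Your deduction $q_\chi^{-1}(W)\subseteq dom(p)=X^{\chi'-sst}$ therefore collapses, and with it your derivation of (2), which uses the collapsed equality $dom(f)=q_\chi(V)$ and $exc(f)=Y_\chi\setminus dom(f)$.

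The paper's actual mechanism is different and uses almost factoriality in a way your sketch does not anticipate. Given $y\in dom(f)$ with a regular representative $g:W\to Y_{\chi'}$, it picks $x$ over $y$ and a semistable $x'$ over $g(y)$, chooses a global semiinvariant $s'$ with $s'(x')\neq 0$, and writes $s'|_{X^{\chi'-sst}}=q_{\chi'}^*(\sigma)$ for a section $\sigma$ of the semiample descent $A$ of $L_{\chi'}$ on $Y_{\chi'}$. It then pulls $\sigma$ back along $g\circ q_\chi$ to $q_\chi^{-1}(W)$; this is where the AFD hypothesis enters: $Cl(X)$ torsion forces $Pic(q_\chi^{-1}(W))$ to be torsion, so after passing to a multiple the pulled-back bundle is the trivial bundle $L$, its linearization and the pulled-back section are matched with $\chi'$ and $s'$ by comparison on the overlap $V\cap q_\chi^{-1}(W)$, and nonvanishing of $\sigma$ at $g(y)$ then yields $s'(x)\neq 0$ --- a positive certificate that $x\in X^{\chi'-sst}$, rather than a pole-production argument. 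So almost factoriality is used to trivialize a line bundle on an open subset, not to manufacture negative orders of vanishing. Note, incidentally, that the comparison-on-$V$ step is itself delicate for exactly the reason your approach fails: invertible semiinvariants on $V$ (for instance restrictions of the $g_i$ cutting out codimension-one unstable components) can twist a linearization, so agreement of $G$-line bundles on $V$ does not by itself pin down the character on the larger set $q_\chi^{-1}(W)$; if you rework the argument along the paper's lines, this is the point to scrutinize most carefully.
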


\begin{proof}
In the first assertion the inclusion on the left hand side is obvious.\\
To prove the other inclusion, choose $y\in dom(f)$ and an open neighborhood $W\subset Y_\chi$ such that there is a regular morphism $g:W\to Y_{\chi'}$, extending the morphism on $q_\chi(V)$, where again $V=X^{\chi-st}\cap X^{\chi'-sst}$.\\
Choose any $x\in (q_\chi)^{-1}(W)\subset X^{\chi-sst}$ such that $q_\chi(x)=y$ and $x'\in X^{\chi'-sst}$ such that $q_{\chi'}(x')=g(y)\in Y_{\chi'}$. Up to multiples of $\chi'$ there is a section $s'\in H^0(X,L_{\chi'})^G$ such that $s'(x')\neq 0$.\\
The linearized bundle $(L_{\chi'})|_{X^{\chi'-sst}}$ descends to a semiample line bundle $A$ on $Y_{\chi'}$, and hence $s'|_{X^{\chi'-sst}}=q_{\chi'}^*(\sigma)$ for a section $\sigma\in H^0(Y_{\chi'},A)$. Consider the pullback section
$$s=q_\chi^*g^*(\sigma)\in H^0(q_\chi^{-1}(W),q_\chi^*g^*(A))^G.$$
Since $q_\chi^{-1}(W)\subset X$ is open, the Picard group of $q_\chi^{-1}(W)$ is torsion as well, and hence as a line bundle $q_\chi^*g^*(A)\simeq L$ after replacing $A$ by a multiple if necessary.\\
Note that linearized actions of $G$ on the trivial bundle $L$ are given by multiplication with a global cocycle, so the equality $q_\chi^*g^*(A)=(L_\chi)|_{q_\chi^{-1}(W)}$ of $G$-line bundles can be checked on the open subset $q_\chi^{-1}(W)\cap V$, where it holds using the commutativity of the obvious diagram. Similarly $s$ and $s'$ coincide on $q_\chi^{-1}(W)\cap V$, so $s=s'|_{q_\chi^{-1}(W)}$ can be lifted to a global semiinvariant with respect to $\chi'$. Now we are done since $s(x)=\sigma(g(y))=s'(x')\neq 0$.\\
This proves the first assertion.\\

Using the description of $dom(f)$ given in the first assertion, the second assertion is obvious.
\end{proof}

We now have established the results needed to show that all subsets and quotients of the form as in Convention \ref{groupsituation} are induced by GIT.

\begin{rem}\label{R:allquotientsgit}
Suppose that a reductive group $G$ acts on the spectrum $X$ of an almost factorial domain $A$. Furthermore assume that there exists an open $G$-invariant subset $U\subset X$ and a geometric quotient
$$q:U\to Y,$$
where $Y$ is projective. We then claim that $U=X^{\chi-sst}$ for a character $\chi\in \chi(G)$ in the interior of a GIT chamber.\\

\begin{proof}
We adapt the classical proof of a similar result in the smooth case (\cite{mumford} Converse 1.13) to our situation.\\
Fix an ample line bundle $A\in Pic(Y)$ and consider the pullback
$$q^*(A)\in Pic^G(U).$$
As in the proof of Lemma \ref{L:rationalmaps} we can write $q^*(A)=(L_\chi)|_U$ for a character $\chi\in \chi(G)$ contained in a stable class with respect to $U$, at least after taking a suitable multiple of $A$. Furthermore, by an argument as in Corollary \ref{C:locusstable}, we see that the codimension one components of $X\setminus U$ are unstable with respect to $\chi$. The original proof furthermore employs the fact that the complement of an open affine subset is of pure codimension one, which carries over to the normal case by an application of the Hartogs lemma \ref{L:hartogs}.\\
Now that these facts are established, the proof of \cite{mumford} provides us with the assertion $U\subset X^{\chi-st}$. Hence we have a diagram
$$\xymatrix{U \ar@{^(->}[r] \ar[d]_q & X^{\chi-st} \ar[d] \ar@{^(->}[r] & X^{\chi-sst} \ar[d]^{q_\chi} \\
Y \ar@{^(->}[r] & q_\chi\left(X^{\chi-st}\right) \ar@{^(->}[r] & Y_\chi.}$$
Since $Y$ is projective and $q_\chi\left(X^{\chi-st}\right)$ is irreducible, the open immersion
$$Y\hookrightarrow q_\chi\left(X^{\chi-st}\right)$$
is an isomorphism, and the same reasoning applies to the lower right arrow of the diagram. Because geometric quotients parametrize all orbits, this implies that $U$ and $X^{\chi-st}$ consist of the same orbits. Furthermore, by the properties of a good quotient the closure of any orbit in $X^{\chi-sst}$ must intersect the closure of an orbit in $X^{\chi-st}$, and hence contains it. But orbits in $X^{\chi-st}$ are of maximal dimension, so that these orbits necessarily coincide. In other words, the maps in the upper row are identities.
\end{proof}
\end{rem}

We are finally ready to prove the main result of this section.

\begin{thm}\label{T:quotientmoridream}
In the situation of Convention \ref{groupsituation} the following holds.
\begin{enumerate}
\item Section rings of divisors on $Y$ are finitely generated, the rational maps associated to stable characters are contractions of $Y$, and every rational contraction to a normal, projective variety is of such a form.
\item $Y$ is a Mori dream space.
\item Under the canonical map $\psi:\chi(G)_\Q\to Pic(Y)_\Q$ the image of the $G$-ample stable GIT classes is exactly the pseudoeffective cone of $Y$, and Mori chambers of $Y$ are identified with stable GIT chambers.
\end{enumerate}
\end{thm}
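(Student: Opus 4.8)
The plan is to prove the three assertions in order, leveraging the structural results already established in this section. The heavy lifting has largely been done by Proposition \ref{L:isosectionrings} (isomorphism of section rings), Lemma \ref{L:lifting} (lifting sections along the rays $\chi_i$), Lemma \ref{L:rationalmaps} (domain and exceptional locus of the GIT-induced rational maps), and Remark \ref{R:allquotientsgit} (every geometric quotient of the relevant type is a GIT chamber quotient). The strategy is to translate each Mori-theoretic assertion about $Y$ into a GIT statement about characters in $\chi(G)_\Q$ via the surjection $\psi$, and then read off the answer from the finitely many rational polyhedral GIT chambers guaranteed by \cite{dolghu} and \cite{halic}.

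\textbf{Assertion (1).} First I would prove finite generation: for any divisor $D$ on $Y$, choose a character $\chi$ with $\psi(\chi)=D$ (possible since $\psi$ is surjective), then by Proposition \ref{L:isosectionrings} the section ring $R(Y,D)$ is isomorphic to the invariant section ring $R(X,L_\chi)^G$, which is finitely generated by the Theorem of Hilbert-Nagata (as used already in the proof of Lemma \ref{L:lifting}). This simultaneously shows that the associated rational map $f_D$ exists and is, up to multiples, a rational contraction. Next I would identify $f_D$ with the GIT-induced map $f:Y_\chi\dashrightarrow Y_{\chi'}$ of Lemma \ref{L:rationalmaps}, where $\chi'$ is any stable class (that $f_D$ factors through $Y_{\psi(\chi)}=\mathrm{Proj}\,R(X,L_\chi)^G$ follows from Proposition \ref{L:isosectionrings}). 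Conversely, given any rational contraction $f:Y\dashrightarrow Z$ to a normal projective variety, I would pull back an ample divisor on $Z$, express its class as $\psi(\chi)$ for some stable $\chi$, and use the identification of section rings to exhibit $f$ as the map $f_{\psi(\chi)}$ up to Mori equivalence.

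\textbf{Assertion (2).} To conclude $Y$ is a Mori dream space, I would verify the three defining conditions. $\Q$-factoriality and $\mathrm{Pic}(Y)_\Q=N^1(Y)_\Q$ should follow because $Y$ is a geometric quotient with finite stabilizers of an open subset of a spectrum of an AFD, so $\mathrm{Cl}(Y)_\Q$ is generated by $\psi$-images of characters; finiteness of the Picard number comes from Proposition \ref{P:picardnumber}. The nef cone being spanned by finitely many semiample bundles, and the moving cone decomposing via SQMs, both follow from the finiteness and rational polyhedrality of the GIT chamber decomposition: each chamber gives a quotient $Y_\chi$, and the birational maps between adjacent stable chambers are SQMs by Lemma \ref{L:rationalmaps}(2) (the exceptional locus $q_\chi(X^{\chi-sst}\cap X^{\chi'-unst})$ has codimension at least two precisely when both $\chi,\chi'$ lie in the interiors of stable chambers, since Corollary \ref{C:locusstable} rules out codimension-one unstable components). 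I would assemble these $Y_\chi$ as the finitely many SQMs whose nef cones tile the moving cone.

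\textbf{Assertion (3).} The pseudoeffective cone statement is immediate from the last line of Proposition \ref{L:isosectionrings}: $\Eff(Y)$ is the image under $\psi$ of the stable $G$-ample classes. For the identification of Mori chambers with stable GIT chambers, I would show that two stable characters $\chi,\chi'$ lie in the same GIT chamber if and only if $\psi(\chi)$ and $\psi(\chi')$ are Mori equivalent. One direction uses that GIT-equivalent characters have the same semistable locus, hence the same $\mathrm{Proj}$ and the same contraction; the other uses Lemma \ref{L:rationalmaps}(2), since crossing a wall between distinct chambers produces a nontrivial exceptional locus and thus a genuinely different contraction. \textbf{The main obstacle} I anticipate is controlling the interplay between the two kinds of kernel directions of $\psi$ — the $E(X)^G$ part (which never changes the semistable locus) and the $\chi_i$ directions (which can). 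One must argue carefully that the stable-class condition is exactly what guarantees that the chamber decomposition descends cleanly through $\psi$ to give the Mori chamber decomposition, with no collapsing or spurious walls; this is where Lemma \ref{L:lifting} and the notion of a stable class do the essential work, and writing down the bijection on chambers rigorously rather than just on their interiors will require the rational polyhedrality from \cite{dolghu}.
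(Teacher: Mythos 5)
Your overall architecture matches the paper's proof — translate everything through $\psi$, use Lemma \ref{L:lifting} and Proposition \ref{L:isosectionrings} to identify section rings, and Lemma \ref{L:rationalmaps} to compare contractions — and your treatment of assertion (1), the pseudoeffective cone, and the easy direction of (3) is essentially the paper's. However, your proof of assertion (2) contains a genuine error: the claim that wall-crossings between adjacent stable chambers are SQMs, justified by saying Corollary \ref{C:locusstable} forces the exceptional locus $q_\chi(X^{\chi-sst}\cap X^{\chi'-unst})$ into codimension at least two, is false. Corollary \ref{C:locusstable} only says that the $\chi$-semistable locus lies in $U$ up to codimension two, i.e.\ it controls the components $Z_i$ of $X\setminus U$; it says nothing about divisorial $\chi'$-unstable loci \emph{inside} $U\cap X^{\chi-sst}$, and these do occur. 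In the paper's own Example \ref{ex:quiver}, the chambers A and B are both stable with respect to chamber B, with quotients $\P^2$ and $\F_1$, and the induced map $\F_1\dasharrow\P^2$ is a divisorial contraction with exceptional locus $q_B\bigl(X^{B-sst}\cap\{e=0\}\bigr)$, a divisor — not an SQM. Were your claim true, all stable chamber quotients would be isomorphic in codimension one and $\Mov(Y)$ would equal $\Eff(Y)$. The paper instead obtains SQMs only from chambers meeting the moving cone: for $D$ \emph{movable} in the interior of a Mori chamber, movability forces the complement of $dom(f_D)$ to have codimension at least two, and Lemma \ref{L:rationalmaps} makes $f_D$ an isomorphism on its domain, hence an SQM. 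You also never address semiampleness of nef classes on the models $Y_\chi$ (nef does not imply semiample); the paper handles this by writing a nef boundary class as $\psi(\chi')$ with $\chi'$ in a GIT class in the closure of the chamber of $\chi$, observing that $f_{\chi'}\circ f_\chi^{-1}$ is then \emph{regular} by Lemma \ref{L:rationalmaps}, and pulling back $\Ocal(1)$. Finiteness and polyhedrality of the GIT fan alone do not give this.

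The second gap is the hard direction of (3), which you correctly flag as "the main obstacle" but do not resolve. Your argument that "crossing a wall produces a nontrivial exceptional locus and thus a genuinely different contraction" only yields, via Lemma \ref{L:rationalmaps}, the equality $U\cap X^{\chi-sst}=U\cap X^{\chi'-sst}$ when $\psi(\chi)$ and $\psi(\chi')$ are Mori equivalent; it does not exclude the scenario where the semistable loci differ over the divisors $Z_i$ or in higher codimension while the contractions from $Y$ agree. Closing exactly this gap is the heart of the paper's proof: from the Mori equivalence one gets an isomorphism $\phi:Y_\chi\to Y_{\chi'}$ compatible with $f_\chi$ and $f_{\chi'}$, then Corollary \ref{C:locusstable} upgrades the equality on $U$ to $X^{\chi-sst}=X^{\chi'-sst}$ \emph{in codimension one}, the bundle $L_{\chi'}$ restricted to $V=X^{\chi-sst}\cap X^{\chi'-sst}$ is identified with $q_\chi^*$ of an ample bundle transported via $\phi$, and the Hartogs lemma \ref{L:hartogs} extends the relevant semiinvariant section from $V$ to all of $X^{\chi-sst}$, showing every $\chi$-semistable point is $\chi'$-semistable. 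Without this section-extension mechanism (or a substitute), your bijection between Mori chambers and stable GIT chambers is asserted rather than proved, so the proposal as it stands does not establish assertions (2) and (3).
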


\begin{proof}
It is well-known that a quotient of a normal variety is again normal, and $\Q$-factoriality for $Y$ holds by Lemma 2.1 of \cite{hukeel}. Since we assume that stability and semistability with respect to chambers coincide, the chamber quotients are geometric, and hence normal and $\Q$-factorial by the same argument.\\

All rational contractions are induced by a divisor (\cite{hukeel} Lemma 1.6), and since $\psi$ is surjective, we can write any divisor on $Y$ as $D=\psi(\chi)$ up to multiples (more precisely, the Cartier divisor associated to the line bundle $\psi(\chi)$ is linearly equivalent to $D$). By Lemma \ref{L:lifting} we can further assume that $\chi$ is contained in a stable class, so that the canonical map
$$R(X,L_\chi)^G\to R(Y,\psi(\chi))$$
is an isomorphism by Lemma \ref{L:isosectionrings}. Consider the induced isomorphism
$$\phi:Y_\chi=Proj(R(X,L_\chi)^G)\to Proj(R(Y,D)),$$
and the following diagram
$$\xymatrix{U \ar[d]^q & \ar@{_(->}[l]\ar@{^(->}[r] V \ar[d] & X^{\chi-sst} \ar[d]^{q_\chi} \\
Y \ar@{-->}[rd]_{f_D} & q(V)\ar[r] \ar@{_(->}[l] & Y_\chi\ar[ld]^\phi \\
& Proj(R(Y,D)), & }$$
where again $V=U\cap X^{\chi-sst}$. Note that the two squares at the top form the diagram used in the construction of the rational map $f_\chi:Y\dasharrow Y_\chi$ (see section \ref{SS:git}). Commutativity is clear except for the part involving $f_D$ and the isomorphism $\phi$.\\
If we choose generators $s_0,\ldots s_d$ for $R(X,L_\chi)^G$ and $s'_0,\ldots,s'_d$ for $R(Y,D)$ compatible with $\phi$, and without loss of generality of degree 1, then the GIT quotient $q_\chi$ is given as
$$x\mapsto [s_0(x):\ldots:s_d(x)],$$
and a similar description holds for $f_D$. Hence $f_D\circ q|_V=\phi\circ q_\chi|_V$ and using the surjectivity of the quotient map $V\to q(V)$, this implies that the lower triangle commutes as well. This proves the first assertion.\\

If $\chi$ and $\chi'$ are characters corresponding to the same stable class, then by a similar argument as above $\psi(\chi)$ and $\psi(\chi')$ are Mori equivalent.\\
Since there are only finitely many GIT chambers, which are rational polyhedral, this implies that there are only finitely many Mori chambers, which are rational polyhedral as well.\\

Now, assume conversely that $D$ and $D'$ are general elements of the interior of the same Mori chamber. We can again write $D=\psi(\chi)$ and $D'=\psi(\chi')$ for characters $\chi,\chi'\in\chi(G)$ in stable GIT chambers, considered to be in the interior. We want to prove that $\chi$ and $\chi'$ are GIT equivalent, for which it is sufficient to show $X^{\chi-sst}\subset X^{\chi'-sst}$.\\

By Mori equivalence, there is an isomorphism $\phi:Y_{\chi}\to Y_{\chi'}$, which commutes with the contractions $f_{\chi}$ and $f_{\chi'}$. Using this isomorphism it is easy to see that $exc(f_\chi)$ and $exc(f_{\chi'})$ coincide (or that $dom(f_\chi)$ and $dom(f_{\chi'})$ coincide). Thus, by Lemma \ref{L:rationalmaps} we get
$$U\cap X^{\chi-sst}=U\setminus q^{-1}(exc(f_\chi))=U\cap X^{\chi'-sst}.$$
On the other hand
$$X^{\chi-sst}=U\cap X^{\chi-sst}$$
in codimension one by Corollary \ref{C:locusstable}, and similarly for $\chi'$, and thus the equality
$$X^{\chi-sst}=X^{\chi'-sst}$$
holds in codimension one.\\

Consider the following diagram, where $V=X^{\chi-sst}\cap X^{\chi'-sst}$.
$$\xymatrix{X^{\chi-sst} \ar[dd]_{q_{\chi}} && \ar@{_(->}[ll] \ar@{^(->}[rr] V \ar[ld]_{q_{\chi}} \ar[rd]^{q_{\chi'}} && X^{\chi'-sst} \ar[dd]^{q_{\chi'}} \\
 & q_{\chi}(V) \ar[rr]^{\sim} \ar@{_(->}[ld] && q_{\chi'}(V) \ar@{^(->}[rd] &  \\
Y_{\chi} \ar[rrrr]^{\stackrel{\phi}{\sim}}  &&&& Y_{\chi'} \\
&q_{\chi}(U\cap V) \ar[rr]_{\sim} \ar@{_(->}[lu] && q_{\chi'}(U\cap V) \ar@{^(->}[ru] & \\
&&q(U\cap V) \ar[ul]_{\sim} \ar[ur]^{\sim} \ar@{^(->}[d] && \\
&&Y \ar@{-->}@/^13mm/[uuull]^{f_{\chi}} \ar@{-->}@/_13mm/[uuurr]_{f_{\chi'}} &&}$$

Note that $q(U\cap V)=q(U\cap X^{\chi-sst})\cap q(U\cap X^{\chi'-sst})$ is an open subset, where both contractions $f_{\chi},f_{\chi'}$ are defined. For the commutativity of the square involving $\phi$ and the isomorphism $q_{\chi}(V)\simeq q_{\chi'}(V)$, note that both maps coincide on the open subset $q_{\chi}(U\cap V)$. The commutativity of the other squares is immediately clear by construction.\\

Pick any point $x\in X^{\chi-sst}$. \\
By the GIT-construction we have the following for any point $y\in Y_{\chi'}$: there exists a section $s\in H^0(X,L_{\chi'})^G$ such that $s|_{X^{\chi'-sst}}=q_{\chi'}^*(s')$, where $s'\in H^0(Y_{\chi'},A')$ is a section of the ample line bundle $A'$ on $Y_{\chi'}$ given as the descent of the bundle $(L_{\chi'})|_{X^{\chi'-sst}}$, such that $s'(y)\neq 0$. In particular, this holds for $y=(\phi\circ q_{\chi})(x)\in Y_{\chi'}$.\\

Via $\phi$ the ample line bundle $A'$ can be identified with an ample line bundle $A$ on $Y_{\chi}$, and $s'$ with a section $s''$, and working through the diagram it is easy to check that
$$(L_{\chi'})|_{V}\simeq q_{\chi}^*(A|_{q_{\chi}(V)}).$$
A similar statement holds for $s$ and $s''$.

We thus have a diagram
$$\xymatrix{H^0(X^{\chi-sst},L_{\chi'})^G \ar@{^(->}[r] & H^0(V,L_{\chi'})^G \ar[d]^\sim \\
H^0(X^{\chi-sst},q_{\chi}^*(A))^G \ar@{^(->}[r] & H^0(V,q_{\chi}^*(A))^G.}$$

By the computation above $X^{\chi-sst}=X^{\chi'-sst}$ in codimension one, so by the Hartogs lemma \ref{L:hartogs} the map in the first line is an isomorphism.\\
This finally gives an identification
$$s|_{X^{\chi-sst}}=q_{\chi}^*(s'')\in H^0(X^{\chi-sst},L_{\chi'})^G,$$
and thus $s(x)=s''(q_{\chi}(x))=s'(\phi\circ q_{\chi}(x))\neq 0$, so $x$ is semistable with respect to $\chi'$ as desired. This proves the third assertion.\\

For the second assertion it remains to check that the Mori chambers constituting the moving cone are spanned by divisors which are pullbacks of semiample divisors under SQM's $f_i:Y\dasharrow Y_i$, and that the nef cones of the $Y_i$ are spanned by finitely many semiample bundles (note that $id:Y\to Y$ is a SQM as well).\\

We first observe that the pullback $f^*(D)$ of a semiample divisor $D$ under a SQM $f:Y\dasharrow Y'$ is movable. Indeed, using the open subsets $V\subset X$ and $W\subset Y'$ such that $f:V\to W$ is an isomorphism, and that the complements of $V$ and $W$ have codimension at least two, $f^*(D)$ is up to multiples the extension of the usual pullback of $D|_W$ to $V$ (consider section 1.2 of \cite{casagrande}).\\
Thus by the Hartogs lemma
$$H^0(Y,f^*(D))\simeq H^0(V,f^*(D))\simeq H^0(W,D)\simeq H^0(Y',D),$$
so $V$ is not contained in the stable base locus.\\

Conversely, the contraction $f_D$ associated to a movable divisor $D$ contained in the interior of a Mori chamber on $Y$ is Mori equivalent to the GIT contraction associated to a character $\chi$ contained in the interior of a stable GIT chamber. By Lemma \ref{L:rationalmaps}, the map $f_D$ is an isomorphism on its domain, which is of codimension at least two, and hence a SQM. By Lemma 1.6 in \cite{hukeel} it follows that $D$ is the pullback of an ample divisor under the SQM $f_D$.\\

This proves that $\Mov(Y)$ is the union of the cones $f_i^*(\Nef(Y_i))$, and we only need to establish that the cones $\Nef(Y_i)$ are spanned by semiample bundles.\\

Keeping the notation
$$f_D\sim f_\chi:Y\dasharrow Y_\chi$$
for $D$ contained in the interior of a Mori chamber $\C\subset \Mov(Y)$, and $\chi$ contained in the interior of a stable GIT chamber, we claim that pullback by $f_D$ identifies $\Nef(Y_\chi)$ with $\C$. To see this we need to establish some properties of the pullback.\\
Since the compositions of two Mori equivalent rational maps with a third rational map are again Mori equivalent, we see that pullback respects Mori chambers. On the other hand, $f_\chi^*$ and $(f_\chi^{-1})^*$ are inverse to each other, because $(f_\chi^{-1})^*$ is again a SQM and pullback is functorial with respect to the composition of SQMs (see Remark 2.8 in \cite{casagrande}).\\
With these preparations our claim about $\C$ and $\Nef(Y_\chi)$ follows, since the pullback of the ample cone is contained in $\C$ by Lemma 1.6 in \cite{hukeel}.\\

A nef divisor $D''$ on $Y_\chi$ thus gets identified with a divisor $D'=(f_\chi)^*(D'')$ contained on the boundary of $\C$. This divisor can be written as $D'=\psi(\chi')$, where $\chi'\in\chi(G)$ is contained in a stable GIT class contained in the closure of the stable GIT chamber in which $\chi$ is contained. This implies that $X^{\chi-sst}$ is a subset of $X^{\chi'-sst}$ (use a linearization to reduce to the case of a $G$-module, and apply \cite{halic} Lemma 3.6), and thus
$$f_{\chi'}\circ (f_\chi)^{-1}: Y_\chi \dasharrow Y_{\chi'}$$
can be considered as a regular map by Lemma \ref{L:rationalmaps}. The pullback
$$\overline{D}=(f_{\chi'}\circ (f_\chi)^{-1})^*(\Ocal(1))$$
in the usual sense, that is with respect to a regular morphism, is semiample on $Y_\chi$. On the other hand, $f_{\chi'}^*(\Ocal(1))=D'+E$ for an $f_{\chi'}$-exceptional divisor $E$ by Lemma 1.6 in \cite{hukeel}, and thus
$$D''|_V=(f_{\chi'}\circ (f_\chi)^{-1})^*(\Ocal(1))|_V,$$
where $V\subset Y_\chi$ is an open subset such that $f_{\chi}^{-1}$ restricted to $V$ is an isomorphism onto its image, and such that $f_{\chi'}$ is regular on $f_\chi^{-1}(V)$. Observe that $\overline{D}$ agrees with $D''$ after restriction to $V$, and further that, by being semiample, it can be moved away from the exceptional set, and thus agrees with $D''$ globally.
\end{proof}

\begin{rem}
In the situation of Convention \ref{groupsituation}, choose any $G$-ample character $\chi$. In any case we can find a GIT chamber $C$ such that $\chi$ is contained in its closure.\\
The quotient $Y$ associated to the chamber is a Mori dream space by Theorem $\ref{T:quotientmoridream}$, and still $Y_\chi$ is normal and projective but may fail to be $\Q$-factorial. There is a regular contraction
$$Y\to Y_\chi$$
by Lemma \ref{L:rationalmaps}, so $Y_\chi$ is at least a not necessarily $\Q$-factorial Mori dream space by the result of \cite{okawa}, Section 10.1.\\
Apart from an upper bound on the Picard number, and the assertion that the Mori chamber structure of $Y_\chi$ is a coarsening of the Mori chamber structure of $Y$, this observation does not seem to provide any quantitative results, and the qualitative result is already covered by \cite{baeker}. Even though this does not seem to be helpful in itself, it sparks the hope that the results of the present document might possibly extend to good quotients.
\end{rem}

\section{Application to quiver moduli \label{S:quiver}}
Quiver moduli were introduced to study the isomorphism classes of modules over artinian algebras (see \cite{king}). Here we do not pursue this approach, and rather focus on the variety structure of these spaces. From our point of view, they form a class of examples for Mori dream spaces, which admits a description in a combinatorial flavor. An explicit instance of this philosophy is given in Examples \ref{ex:quiver} and \ref{ex:hirzebruch}.\\
Conversely, the description of quiver moduli as Mori dream spaces might be helpful in understanding their birational geometry.\\
It has been observed before that Theorem 2.3 in \cite{hukeel} can be applied to some quiver moduli (see for example \cite{craw}), but the assumption on the codimension of the unstable locus seems to restrict the possible applications.\\

Let us first explain how quiver moduli are constructed. For that we loosely follow \cite{reineke}, where the reader may find further details and applications.\\

A quiver $Q=(Q_0,Q_1)$ consists of a set of vertices $Q_0$, and a set of arrows $\alpha:i\to j$ between vertices, where we assume both sets to be finite. A dimension vector is a tuple $d\in\N^{Q_0}$. The representation variety with respect to $Q$ and $d$ is defined as
$$R_d(Q)=\bigoplus_{\alpha:i\to j} \Mat(d_j\times d_i,k).$$
Note that this is just another way to write down affine space.\\

There is a canonical action of the reductive group
$$G_d=\prod_{i\in Q_0}\GL_{d_i}(k)$$
on $R_d(Q)$, given as simultaneous conjugation
$$(g_i)_{i\in Q_0}*(f_\alpha)_{\alpha:i\to j}=(g_j\cdot f_\alpha\cdot g_i^{-1})_{\alpha:i\to j}.$$
The diagonally embedded scalars $\G_m\hookrightarrow G_d$ act trivially on $R_d(Q)$, which induces an action of $PG_d=G_d/\G_m$ on $R_d(Q)$. It is well-known that there is an isomorphism
$$\Z^{Q_0}\to \chi(G_d),$$
defined as
$$\theta=(\theta_i)_{i\in Q_0}\mapsto \left((g_i)_{i\in Q_0}\mapsto \prod_{i\in Q_0} \det(g_i)^{-\theta_i}\right).$$
The $\theta\in \Z^{Q_0}$ are sometimes referred to as stability conditions, and it is easy to see that the characters of $PG_d$ are identified with the subgroup $d^\perp\subset \Z^{Q_0}$ (of the $\theta$ such that $\theta\cdot d=\sum_{i\in Q_0} \theta_i\cdot d_i=0$).\\

For any given quiver $Q$, dimension vector $d$, and stability condition $\theta$, the associated quiver moduli $\M_{d,\theta}(Q)$ is defined as the good quotient of $R_d(Q)^{\theta-sst}$ under the action of $PG_d$. It is well-known that $\M_{d,\theta}(Q)$ is projective if $Q$ does not admit oriented cycles.\\

Obviously $PG_d$ is connected, and $R_d(Q)$ is the spectrum of a polynomial ring (and hence of a UFD), so we obtain the following corollary.\\
Note that if $d$ is coprime, which is that the greatest common divisor of the $d_i$ equals 1, the set of stability conditions for which stability and semistability may differ is a finite union of hyperplanes (see Section 3.5 in \cite{reineke}). Thus for GIT chambers stability and semistability coincide.

\begin{thm}\label{T:quivermds}
Suppose that $Q$ is a quiver which does not admit oriented cycles, that $d$ is a dimension vector, and that $\theta$ is a stability condition contained in the interior of a GIT chamber.\\
Then $\M_{d,\theta}(Q)$ is a Mori dream space satisfying the assertions of Theorem \ref{T:quotientmoridream}. In particular, the Picard number is given as
$$\rho(\M_{d,\theta}(Q))=|Q_0|-(r+1),$$
where $r$ is the number of components of the unstable locus which are of codimension one.
\end{thm}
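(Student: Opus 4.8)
The plan is to recognize $\M_{d,\theta}(Q)$ as a geometric chamber quotient of the form treated in Convention \ref{groupsituation}, and then to harvest every assertion --- Mori dream space structure and Picard number alike --- directly from Theorem \ref{T:quotientmoridream} and Proposition \ref{P:picardnumber}. Concretely, I would put $X=R_d(Q)=Spec(A)$ with $A=k[R_d(Q)]$ a polynomial ring, hence a UFD and in particular an AFD, take $G=PG_d$ with its conjugation action, and set $U=R_d(Q)^{\theta-sst}$. Since $\theta$ lies in the interior of a GIT chamber, stability and semistability coincide, so $U$ equals the stable locus and the associated good quotient $q\colon U\to \M_{d,\theta}(Q)=Y$ is already geometric. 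By King's stability criterion (\cite{king}), $\theta$-stable representations have stabilizer $\G_m$ in $G_d$, hence trivial stabilizer in $PG_d$, so the finiteness-of-stabilizers hypothesis holds; and $Y$ is projective because $Q$ has no oriented cycles.

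The only genuinely quiver-theoretic input, and the step I expect to carry the real content, is verifying $A^G=k$, which is exactly the condition of Convention \ref{groupsituation} forcing all GIT quotients to be projective. Here I would invoke the classical fact that the invariant ring $k[R_d(Q)]^{G_d}$ is generated by traces along oriented cycles of $Q$; the absence of oriented cycles leaves no nonconstant invariants, so $A^{G_d}=k$, and since the diagonal scalars act trivially, $A^{PG_d}=A^{G_d}=k$. With the hypotheses of Convention \ref{groupsituation} in place, Theorem \ref{T:quotientmoridream} immediately gives that $Y$ is a Mori dream space and supplies assertions (1)--(3).

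It then remains only to substitute into the Picard number formula of Proposition \ref{P:picardnumber},
$$\rho(Y)=rk(\chi(G))-\bigl(|\Zcal/G|+rk(E(X)^G)\bigr)+rk(E(Y)).$$
Since $d\neq 0$, the identification $\chi(PG_d)\cong d^\perp\subset\Z^{Q_0}$ gives $rk(\chi(PG_d))=|Q_0|-1$. As $X$ is affine space, $\Ocal(X)^*=k^*$, so $E(X)=0$ and $rk(E(X)^G)=0$; likewise $E(Y)=0$ because $Y$ is projective. Finally $PG_d$ is connected, so by Lemma \ref{L:actiononZ}(1) the induced action on $\Zcal(U)$ is trivial and $|\Zcal/G|=|\Zcal(U)|=r$. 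Substituting yields $\rho(Y)=(|Q_0|-1)-r=|Q_0|-(r+1)$. Beyond the standard invariant-theoretic computation of $A^G$, I foresee no real obstacle, as the remaining verifications are formal consequences of the affine-space and connectedness features of the quiver setup; in fact, the triviality of stabilizers together with $A$ being a UFD and $PG_d$ connected makes the sequence of Proposition \ref{P:picardnumber} exact on the nose, though only its rational rank is needed here.
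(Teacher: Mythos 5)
Your proposal is correct and takes essentially the same route as the paper, which likewise obtains the theorem as an immediate corollary of Theorem \ref{T:quotientmoridream} and Proposition \ref{P:picardnumber} from the observations that $R_d(Q)$ is the spectrum of a polynomial ring (hence a UFD), that $PG_d$ is connected so the action on $\Zcal(U)$ is trivial and $|\Zcal(U)/G|=r$, and that $rk(\chi(PG_d))=|Q_0|-1$; your explicit verifications (namely $A^{PG_d}=k$ via the fact that invariants are generated by traces along oriented cycles, and trivial stabilizers of stable points via \cite{king}) are precisely the ``well-known'' inputs the paper leaves implicit. One caution: lying in the interior of a GIT chamber does not by itself force stability and semistability to coincide (cf. the thick-wall example \cite{ressayre} cited in the paper); in the paper this is a standing assumption from Section \ref{SS:git}, justified for quivers when $d$ is coprime, so you should flag it as a hypothesis rather than derive it from chamber-interiority.
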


If $Q$ admits oriented cycles, $\M_{d,\theta}(Q)$ is no longer projective, and hence fails to be a Mori dream space for trivial reasons. Since $\M_{d,\theta}(Q)$ is projective over the Hilbert quotient, the author expects that this however gives an example of a relative Mori dream space, which as of yet needs to be defined.\\

Furthermore, we note that the same results hold if we allow relations for the quiver, such that the associated representation variety is again a spectrum of an AFD. However, it seems very hard to describe relations which meet this criterion in general.

\begin{rem}\label{R:quivermdsalternative}
There is an alternative proof showing that quiver moduli as in Theorem \ref{T:quivermds} are Mori dream spaces, which only makes use of the quotient result established in \cite{hukeel}.\\
Indeed, consider the framed quiver $\widehat{Q}$ with additional stability parameter $\eps$ attached to the additional point $\infty$ (we refer to \cite{engelreineke} Section 3 for more details). We may take our framing to be large, in the sense that for each vertex $i\in Q_0$ we attach at least $n_i=d_i+1$ arrows $\infty\to i$.\\
Similarly to \cite{engelreineke} Proposition 3.3, we see that for $\eps\gg 0$ a representation $(M,f)$ of the framed quiver is stable if and only if it is semistable if and only if $f$ is dense, which is that there is no proper subrepresentation containing the vector space $k$ attached to $\infty$. Since the framing is large, we can always equip the arrows landing in each vertex $i\in Q_0$ with linear maps such that their images generate the vector space at $i$, and thus the unstable locus is contained in
$$R_{d}(Q)\times \prod_{i\in Q_0}Mat(d_i\times (d_i+1))_{rk\leq d_i-1},$$
which is well-known to be of codimension at least two. By Theorem 2.3 in \cite{hukeel} the associated quotient $\widehat{\M}_{\eps\gg 0}$ is a Mori dream space, and so is the other chamber quotient $\widehat{\M}_{\eps\approx 0}$, where we take $\eps$ to be positive but sufficiently small (\cite{hukeel} Proposition 1.11.(2)). However, for this chamber there is the structure of a bundle
$$\widehat{M}_{\eps\approx 0}\to \M,$$
where $\M$ is the moduli of the unframed quiver (\cite{engelreineke} Proposition 3.8). Hence $\M$ is a Mori dream space as well by the main result of \cite{okawa}.\\
However, it seems difficult to extract information about the Picard number or the Mori chambers using this construction.\\
Of course we could directly apply the result of \cite{baeker}, since the prequotient $R_d(Q)$ is just affine space, but this does provide us with no quantitative information about the moduli space at all.
\end{rem}

\section{Quotients of Mori dream spaces \label{S:quotientofmds}}
Let $Y$ denote a Mori dream space, acted upon by a reductive group $G$. Again we assume that stability and semistability with respect to chambers coincide (compare with section \ref{SS:git}). Suppose that there is a $G$-invariant open subset $V\subset Y$ admitting a geometric quotient
$$q':V\to Z,$$
where $Z$ is projective and stabilizers on $V$ are finite. We want to show in this section that $Z$ is again a Mori dream space, and compute the Picard number.\\
Again, by Remark \ref{R:allquotientsgitmds} given below, such a quotient is given as a GIT quotient with respect to the interior of a chamber. But the proofs occurring in this section do not depend on this observation.\\

Let $r=\rho(Y)$ denote the Picard number of $Y$, and choose line bundles
$$D_1,\ldots,D_r\in Pic(Y),$$
which form a $\Q$-basis of $Pic(Y)_\Q$, and such that $\Eff(Y)$ is contained in the cone spanned by these bundles. Then the Cox ring
$$Cox(Y)=\bigoplus_{m_1,\ldots,m_r\geq 0}H^0\left(Y,D_1^{m_1}+\ldots+D_r^{m_r}\right)$$
with respect to this choice of a basis is a finitely generated $k$-algebra, and there is a quotient representation
$$q:U\to Y,$$
where $U\subset X=Spec(Cox(Y))$ is the stable and semistable locus for some character $\chi_0\in \chi(T)$, and $T=Hom(\Z^r,\G_m)\simeq \G_m^r$ is a torus acting on $Cox(Y)$, and hence on $X$ ([HK] Proposition 2.9). The codimension of $X\setminus U$ is greater or equal to two.\\

\begin{rem}\label{R:coxnounits}
It is easy to see that
$$E(X)=Cox(Y)^*/k^*=0$$
using the projectivity of $Y$ and the multigrading of $Cox(Y)$ (compare with Corollary 2.2 in \cite{arzhantsev}).
\end{rem}

A lifting of the action $\mu_G:G\times Y\to Y$ is defined to be an action
$$\mu_G':G\times X\to X$$
such that $U$ is $G$-invariant, and such that the actions of $T$ and $G$ on $X$ commute. Obviously, this is equivalent to an action of the product group $T\times G$ on $X$.\\

As a first step we ensure the existence of a lifting to a suitably chosen Cox ring. Additionally we establish that we may take the Cox ring to be an AFD. Note that if $Pic(Y)$ is torsion-free we may even assume $Cox(Y)$ to be a UFD (consider \cite{arzhantsev}).
\begin{lemma}\label{L:coxlifting}
There exists a basis $D_1,\ldots,D_r$ of $Pic(Y)_\Q$ such that the following assertions hold.
\begin{enumerate}
\item The Cox ring with respect to this basis is almost factorial.
\item There exists a lifting of the action $\mu_G:G\times Y\to Y$ to $X$.
\end{enumerate}
\end{lemma}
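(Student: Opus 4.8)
The plan is to establish the two assertions by separate constructions and then to check that both can be met by a single basis: almost factoriality holds for \emph{any} $\Q$-basis, so the binding constraint comes from the lifting, and the basis must be chosen inside $G$-invariant classes to make the lifted action commute with $T$.

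For the first assertion I would compute the divisor class group of $X=Spec(Cox(Y))$. The ring $Cox(Y)$ is a normal, finitely generated $k$-algebra (it is the Cox ring of a Mori dream space), so $X$ is a normal affine variety and it only remains to see that $Cl(X)$ is torsion. Since $X\setminus U$ has codimension at least two, normality gives $Cl(X)=Cl(U)$; and as $q:U\to Y$ is a good quotient by the torus $T$ with character lattice $\Lambda:=\langle [D_1],\ldots,[D_r]\rangle\subset Pic(Y)$, the standard sequence for a torus quotient (the analogue of Proposition 5.1 of \cite{kkv}, using $E(X)=0$ from Remark \ref{R:coxnounits}) reads $0\to\Lambda\to Cl(Y)\to Cl(U)\to 0$. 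Because $Y$ is $\Q$-factorial we have $Cl(Y)=Pic(Y)$, and because the $D_i$ form a $\Q$-basis the inclusion $\Lambda\subset Pic(Y)$ has finite index; hence $Cl(X)\cong Pic(Y)/\Lambda$ is finite and $X$ is almost factorial. This holds for every $\Q$-basis, and when $Pic(Y)$ is torsion-free a $\Z$-basis gives $\Lambda=Pic(Y)$, so $Cl(X)=0$ and $X$ is a UFD, in accordance with \cite{arzhantsev}.

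For the second assertion I would first record that the $G$-action on $Pic(Y)$ factors through the finite group $\Gamma=G/G^0$, since the connected group $G^0$ maps into the discrete automorphism group of the finitely generated group $Pic(Y)$ and is therefore trivial on it. The aim is to choose $D_1,\ldots,D_r$ inside $G$-invariant classes, because then each graded piece $H^0\!\left(Y,\sum m_iD_i\right)$ of $Cox(Y)$ is $G$-stable, so the resulting $G$-action preserves the $\Lambda$-grading, i.e.\ commutes with $T$, which is exactly the $T\times G$-action required. Concretely, once $[\sum m_iD_i]$ is $G$-fixed, a suitable multiple $nD_i$ admits a $G$-linearization by the linearization sequence of \cite{kkv} (the obstruction sits in a group built from $Pic(G)$ and $H^1(G,\Ocal_Y^*)$ and is killed by passing to a multiple); since replacing each $D_i$ by a positive multiple preserves being a $\Q$-basis, the containment $\Eff(Y)\subset\mathrm{cone}(D_i)$, and the finite-index conclusion of the first part, we may assume all $D_i$ are $G$-linearized. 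Tensoring these linearizations defines the lift on $Cox(Y)$; it is compatible by functoriality of $Pic^\bullet(\bullet)$ (Lemma \ref{L:picfunctor}), and $U$ is $G$-invariant because $G$ permutes the $\chi_0$-semiinvariants and hence preserves the $T$-semistable locus.

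The hard part is guaranteeing that a $\Q$-basis of $Pic(Y)_\Q$ consisting of $G$-invariant classes exists at all, equivalently that $\Gamma$ acts trivially on $Pic(Y)_\Q$: the $G^0$-part is automatic, so the entire difficulty is the finite quotient $\Gamma$. When $G$ is connected — which already covers the quiver application with $G=PG_d$ — there is nothing to check and any $\Q$-basis, rescaled to be linearizable, works. In the general reductive case I would reduce to this situation, either by descending through $G^0$-invariants or by replacing the grading lattice with a $\Gamma$-invariant one and verifying that the induced structure still yields a genuine commuting lift; controlling this interaction between the component group $\Gamma$ and the chosen grading is the main obstacle, and is precisely where the basis must be selected with care rather than arbitrarily.
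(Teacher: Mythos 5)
Your overall route coincides with the paper's: part (1) via the \cite{kkv} sequence applied to the torus quotient $q:U\to Y$ together with $E(X)=0$ (Remark \ref{R:coxnounits}) and the codimension-two bound, and part (2) by fixing $G$-linearizations of the $D_i$, tensoring to linearize each $D^a$, and noting that the fibrewise-linear $G$-action commutes with the $T$-action, which scales the multidegree-$a$ piece by $t(a)$. However, there are two genuine gaps. In part (1) you assert that $Cox(Y)$ is normal ``because it is the Cox ring of a Mori dream space'' and conclude that \emph{any} $\Q$-basis works; the paper does not get normality for free. It first replaces the $D_i$ by sufficiently high powers so that $T$ acts freely on $U$ (the proof of \cite{hukeel} Proposition 2.9), i.e.\ passes to a \emph{preferred} basis in the sense of \cite{jow}, and only then invokes Jow's Proposition 1.12 for normality of the associated Cox ring. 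The same freeness is what kills the stabilizer-character term $\prod_{x}\chi(T_x)$ in the \cite{kkv} diagram and makes your sequence $0\to\chi(T)\to Pic(Y)\to Pic(U)\to 0$ exact on the nose; for an arbitrary basis you only get exactness up to torsion --- which, to be fair, would still give torsionness of $Cl(U)$, but not the normality of $X$ that almost factoriality also requires. (Also, $\Q$-factoriality gives $Cl(Y)=Pic(Y)$ only up to finite index, not integrally; harmless for your torsion conclusion, but false as stated.)

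More seriously, part (2) of your proposal is not a complete proof, by your own admission: you correctly observe that the lifting requires the classes $[D_i]$ to be fixed under the component group $\Gamma=G/G^0$ (otherwise the $G$-action cannot preserve the multigrading, i.e.\ cannot commute with $T$), you note that $(Pic(Y)_\Q)^\Gamma$ may be a proper subspace so that a $G$-fixed basis need not exist, and you then leave the general reductive case unresolved, sketching two possible reductions without carrying either out. Since the lemma is stated for an arbitrary reductive $G$ (the setting of Convention \ref{groupsituation} and Theorem \ref{T:quotientofmds}), your argument proves the assertion only for connected $G$ --- enough for the quiver application with $PG_d$, but not for the lemma as stated. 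It is worth saying that the paper itself is terse at exactly this point: it fixes a $G$-linearization of each $D_i$ ``since $Y$ is normal'', which covers the connected part (a power of any bundle on a normal variety is linearizable for a connected group) and quietly assumes the classes are preserved by the full group. So your caution identifies a real subtlety in the source; but flagging the obstacle and deferring it is a missing step, not a proof, and closing it (e.g.\ by showing the $\Gamma$-action on $Pic(Y)_\Q$ is trivial in the relevant situation, or by reworking the quotient theorem for $T\rtimes G$ in place of $T\times G$) is precisely what would be needed.
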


\begin{proof}
We start with any basis $D_1,\ldots,D_r\in Pic(Y)$ of $Pic(Y)_\Q$ such that $\Eff(Y)$ is contained in the cone spanned by these bundles.\\
The proof of [HK] Proposition 2.9 shows that $T$ acts freely on $U$ if we replace the $D_i$ with sufficiently high powers. In the terminology of Jow (see [J] Theorem 1.8) such a basis is called a preferred basis, and the associated Cox ring is normal ([J] Proposition 1.12).\\
We again consider the commutative diagram given in [KKV] Proposition 5.1, where $X$ in the notation of [KKV] corresponds to $U$, $G$ to $T$, and $X//G$ to $Y$. Using the normality of $U$ we add the extension ([KKV] Lemma 2.2)
$$Pic(U)\to Pic(T).$$
Using the Hartogs lemma \ref{L:hartogs} the group $E(U)\simeq E(X)$ vanishes by the Remark \ref{R:coxnounits} above. Further $H^1(T/T_0,E(U))$ vanishes since $T$ is connected, and $\prod_{x\in\C} \chi(T_x)$ vanishes because the action of $T$ on $U$ is free. Finally, $T$ is isomorphic to an open subset of affine space $\A^r$, so $Pic(T)=0$. The diagram thus induces an exact sequence
$$0\to \chi(T)\to Pic(Y) \to Pic(U)\to 0,$$
where $\chi(T)$ and $Pic(Y)$ are both of rank $r$. Since $U$ is $\Q$-factorial ([HK] Lemma 2.1), this implies the first result.\\

To see the second assertion we fix a $G$-linearization for each bundle $D_i$, which is possible since $Y$ is normal. These linearizations induce a linearization of each product
$$D^a=D_1^{a_1}+\ldots+D_r^{a_r}$$
for $a\in\N^r$. Note that the linearization of $D^a$ is uniquely determined by the linearizations of the $D_i$ since the decomposition of $D^a$ as a product of the $D_i$ is unique. This gives an action of $G$ on $H^0(Y,D^a)$, and thus on $Cox(Y)$ and $X$ as well.\\

Recall that the action of $T=Hom(\Z^r,\G_m)$ on $Cox(Y)$ is given as
$$t*s=t(a)\cdot s$$
for $t\in T$ and $s\in H^0(Y,D^a)$ homogenous of multidegree $a\in\N^r$. Since the action of $G$ is given by a linear action on the fibres of $D^a$ these two actions commute.\\

The commuting actions of $G$ and $T$ on $X$ induce commuting contragredient actions on $H^0(X,L)=\Ocal(X)$. Hence, if $f\in H^0(X,L_\chi)^T$ is a $T$-eigenfunction with respect to some character $\chi\in\chi(T)$, which does not vanish in a point $x\in X$, then $g*f$ for $g\in G$ is again a $T$-eigenfunction with respect to $\chi$, which does not vanish in the point $g*x$.\\

This implies that the semistable locus with respect to any character $\chi$ of $T$ is $G$-invariant; in particular this holds for $U$.
\end{proof}~\\

Fixing a basis which satisfies the assertions of the lemma above, we obtain the following result. The proof is straightforward.

\begin{lemma}\label{L:quotientcomposition}
Let $V\subset Y$ denote a $G$-invariant open subset such that there exists a good quotient
$$q':V\to Z.$$
Then the composition
$$q^{-1}(V)\xrightarrow{q}V\xrightarrow{q'}Z$$
is a good quotient with respect to the action of $T\times G$ on $X$. If $q'$ is geometric, so is the composition.
\end{lemma}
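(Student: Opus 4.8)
The plan is to verify the defining axioms of a good quotient directly for the composite $\pi=q'\circ q\colon q^{-1}(V)\to Z$, deducing each one from the corresponding property of $q$ and of $q'$. Before doing so I would fix the structural inputs: since the $T$- and $G$-actions on $X$ commute they assemble into an action of $T\times G$, and by the construction of the lifting in Lemma \ref{L:coxlifting} the quotient $q\colon U\to Y$ is $G$-equivariant, the descended $G$-action on $Y=U//T$ being precisely $\mu_G$. From this I get immediately that $q^{-1}(V)$ is $T$-invariant (it is $q$-saturated) and $G$-invariant (as $V$ is $G$-invariant and $q$ is $G$-equivariant), hence $T\times G$-invariant, and that $q$ restricts to a good quotient $q^{-1}(V)\to V$ for $T$.

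The routine axioms I would dispatch first. The map $\pi$ is affine and surjective as a composite of affine surjective morphisms, and it is $T\times G$-invariant because $q$ is $T$-invariant and $G$-equivariant while $q'$ is $G$-invariant. For the structure sheaf I would, for open $W\subset Z$, chain the two invariance identities,
$$\Ocal_Z(W)=\Ocal\big((q')^{-1}(W)\big)^G=\Big(\Ocal\big(q^{-1}((q')^{-1}(W))\big)^T\Big)^G=\Ocal\big(\pi^{-1}(W)\big)^{T\times G},$$
where the last equality uses that taking $T$-invariants and then $G$-invariants computes $T\times G$-invariants, valid since the two actions commute. This yields $\Ocal_Z=(\pi_*\Ocal_{q^{-1}(V)})^{T\times G}$.

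The step demanding the most care is the separation property for closed invariant sets, since it is the only place where the topological conclusions of both quotients must be combined through the $G$-equivariance of $q$. For a closed $T\times G$-invariant $F\subset q^{-1}(V)$ I would argue that $q(F)$ is closed (good quotient property of $q$ applied to the $T$-invariant closed set $F$) and $G$-invariant (equivariance together with surjectivity of $q$ onto its image), whence $\pi(F)=q'(q(F))$ is closed by the good quotient property of $q'$; and for disjoint closed $T\times G$-invariant $F_1,F_2$, the good quotient $q$ already separates them into disjoint closed $G$-invariant sets, which $q'$ in turn separates, giving $\pi(F_1)\cap\pi(F_2)=\emptyset$. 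Finally, for the geometric assertion I would compute fibres: if $q'$ is geometric then $(q')^{-1}(z)=G\cdot y$ is a single $G$-orbit, and since $q$ is geometric and $G$-equivariant, $q^{-1}(g\cdot y)=g\cdot q^{-1}(y)=T\cdot(g\cdot w_0)$ for any $w_0\in q^{-1}(y)$; taking the union over $g\in G$ and commuting the two actions identifies $\pi^{-1}(z)$ with the single orbit $(T\times G)\cdot w_0$, so the composite is geometric as well.
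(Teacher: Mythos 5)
Your proof is correct, and it is exactly the direct verification that the paper omits (the paper simply states ``The proof is straightforward'' and records no argument): you check affineness, surjectivity, invariance, the structure-sheaf identity via iterated invariants $\bigl(\Ocal^T\bigr)^G=\Ocal^{T\times G}$, the separation axioms, and the orbit-fibre condition in the geometric case. The one genuinely non-routine input --- that the lifted $G$-action on $X$ from Lemma \ref{L:coxlifting} descends through $q$ to the original action $\mu_G$ on $Y$, so that $q$ is $G$-equivariant and the composite is $(T\times G)$-invariant --- is correctly identified and justified by you from the construction via $G$-linearizations, which is if anything more careful than the paper's treatment.
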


\begin{rem}\label{R:allquotientsgitmds}
A similar assertion as in Remark \ref{R:allquotientsgit} holds if we replace $X$ by a Mori dream space $Y$.

\begin{proof}
Replacing the notation $q:U\to Y$ by a geometric quotient
$$q':V\to Z,$$
where $Z$ is projective and $V$ is an open and $G$-invariant subset in a Mori dream space $Y$, we get that $q^{-1}(V)\to Z$, where $q$ is given as in Lemma \ref{L:quotientcomposition} and the preceding construction, is a geometric quotient as in the case $X=Spec(A)$ treated in Remark \ref{R:allquotientsgit}. Using the descent properties of the quotient $q:U\to Y$, with notation again as in Lemma \ref{L:quotientcomposition}, we can thus lift the pullback $(q')^*(A)\in Pic^G(V)$ of an ample line bundle on $Z$ to a $G$-line bundle on $Y$, and the proof of Remark \ref{R:allquotientsgit} applies.
\end{proof}
\end{rem}

We are now able to prove the main result of this section.

\begin{thm}\label{T:quotientofmds}
In the situation above, $Z$ is a Mori dream space with Picard number
$$\rho(Z)=\rho(Y)+rk(\chi(G))-|\Zcal(V)/G|.$$
If $G$ is connected, $Pic(Y)$ is torsionfree, and stabilizers of points in $V$ are trivial, the group $Pic(Z)$ is torsionfree as well. Furthermore, under the canonical maps
$$Pic(Y)_\Q \hookrightarrow \chi(T\times G)_\Q \twoheadrightarrow Pic(Z)_\Q,$$
the intersections of Mori chambers of $Z$ with the image of $Pic(Y)_\Q$ is a refinement of the subsystem of Mori chambers of $Y$, consisting of the chambers whose images in $\chi(T\times G)_\Q$ lie in stable GIT chambers with respect to the action of $T\times G$ and the open subset $q^{-1}(V)$.
\end{thm}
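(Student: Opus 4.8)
The plan is to deduce everything from the results of Section \ref{S:quotientafd} applied to the action of the product group $T\times G$ on $X=Spec(Cox(Y))$. First I would invoke Lemma \ref{L:coxlifting} to fix a preferred basis, so that $X$ is an AFD (a UFD when $Pic(Y)$ is torsionfree) carrying a $T\times G$-action lifting $\mu_G$, with $U\subset X$ the free geometric $T$-quotient locus over $Y$ and $X\setminus U$ of codimension at least two. Writing $W=q^{-1}(V)$, Lemma \ref{L:quotientcomposition} shows that $W\to Z$ is a geometric quotient for $T\times G$, and Remark \ref{R:allquotientsgitmds} realizes it as a chamber quotient. Since $\Ocal(X)^T=k$ (the degree-zero part of the Cox ring) we also get $\Ocal(X)^{T\times G}=k$, so the data $(X,\,T\times G,\,W,\,Z)$ satisfies Convention \ref{groupsituation}. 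Theorem \ref{T:quotientmoridream} then gives at once that $Z$ is a Mori dream space with all the asserted properties.

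For the numerical statement I would feed this situation into Proposition \ref{P:picardnumber}, obtaining
$$\rho(Z)=rk(\chi(T\times G))-\bigl(|\Zcal(W)/(T\times G)|+rk(E(X)^{T\times G})\bigr)+rk(E(Z)).$$
Here $rk(\chi(T\times G))=rk(\chi(T))+rk(\chi(G))=\rho(Y)+rk(\chi(G))$, while $E(X)=0$ by Remark \ref{R:coxnounits} and $E(Z)=0$ by projectivity of $Z$. The remaining input is the identity $|\Zcal(W)/(T\times G)|=|\Zcal(V)/G|$. Because $X\setminus U$ has codimension at least two, every codimension-one component of $X\setminus W$ lies in $U$, and since $q$ is a free geometric $T$-quotient it is equidimensional and preserves codimension; thus these components are exactly the irreducible preimages $\overline{q^{-1}(Z_i)}$ of the $Z_i\in\Zcal(V)$, giving a bijection $\Zcal(V)\cong\Zcal(W)$. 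As $T$ preserves the fibres of $q$ it acts trivially on $\Zcal(W)$, and the $G$-equivariance of $q$ matches the residual $G$-action with that on $\Zcal(V)$, so the two orbit sets agree. For the torsion-freeness clause I would note that if $G$ is connected, $Pic(Y)$ torsionfree and the stabilizers on $V$ trivial, then $T\times G$ is connected, the Cox ring may be chosen a UFD, and $T\times G$ acts freely on $W$; the last clause of Proposition \ref{P:picardnumber} then makes the sequence exact on the nose, reducing to $0\to\Z^{\,r'}\to\chi(T\times G)\to Pic(Z)\to 0$ with $r'=|\Zcal(V)/G|$, whence $Pic(Z)$ is torsion-free once the image lattice (generated by the prime elements $f_i=g_i$ of Lemma \ref{L:actiononZ}) is saturated.

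Finally, for the chamber statement I would work inside $\chi(T\times G)_\Q=\chi(T)_\Q\oplus\chi(G)_\Q$ and compare two GIT fans. Theorem \ref{T:quotientmoridream}(3) for $T\times G$ identifies, via $\psi$, the Mori chambers of $Z$ with the stable $T\times G$-GIT chambers; on the other hand the identification $Pic(Y)_\Q=\chi(T)_\Q$ together with the correspondence between Mori chambers of $Y$ and $T$-GIT chambers on $X$ describes the Mori chambers of $Y$ inside $\chi(T)_\Q$, embedded as $\chi\mapsto(\chi,0)$. The key comparison is that for $\chi\in\chi(T)$ every $(\chi,0)$-semiinvariant for $T\times G$ is in particular a $\chi$-semiinvariant for $T$, so $X^{(\chi,0)-sst}\subseteq X^{\chi-sst}$; hence the $T\times G$-fan cut with $\chi(T)_\Q$ refines the $T$-fan, and restricting to the stable chambers yields exactly the asserted subsystem of Mori chambers of $Y$. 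I expect the main obstacle to be the last two points: pinning down the $\Zcal$-matching through the quotient $q$, and above all making the refinement of the two fan structures precise enough to read off the intended stable subsystem of Mori chambers of $Y$.
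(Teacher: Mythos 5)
Your proposal is correct and follows essentially the same route as the paper: lift the $G$-action to an almost factorial Cox ring (Lemma \ref{L:coxlifting}), present $Z$ as a geometric $T\times G$-quotient of $q^{-1}(V)$ (Lemma \ref{L:quotientcomposition}), apply Theorem \ref{T:quotientmoridream} and Proposition \ref{P:picardnumber} with $E(X)=0$ and $E(Z)=0$, identify $\Zcal(V)$ with $\Zcal(q^{-1}(V))$ via the codimension-two complement of $U$ together with the triviality of the $T$-action on components, and compare the two GIT fans inside $\chi(T\times G)_\Q$. Your two hedges are precisely the points where the paper is equally terse: it deduces torsion-freeness of $Pic(Z)$ directly from the integrally exact sequence of Proposition \ref{P:picardnumber} without discussing saturation of the kernel lattice, and it justifies the fan-refinement step only by invoking compatibility of the $T$- and $T\times G$-chamber structures through the Hilbert--Mumford criterion, whereas your semiinvariant inclusion $X^{(\chi,0)-sst}\subseteq X^{\chi-sst}$ gives the containment of semistable loci but would still need that criterion to rule out a $T\times G$-class straddling a $T$-wall.
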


\begin{proof}
Using the Lemmata \ref{L:coxlifting} and \ref{L:quotientcomposition} we can write $Z$ as a geometric quotient of the open set $U'=q^{-1}(V)$. By the codimension assertion on the unstable locus with respect to $\chi_0$, the codimension one components of $X\setminus U'$ in $X$ are in one-to-one correspondence under $q$ with the codimension one components of $Y\setminus V$ under $q$. Furthermore, since $T$ is connected, the orbits of the action of $G$ on $\Zcal(V)$ are identified with the orbits of $T\times G$ on $\Zcal(U')$.\\

Hence, $Z$ is a Mori dream space by Theorem \ref{T:quotientmoridream}, with Picard number
$$\rho(Z)=rk(\chi(T\times G))-|\Zcal(V)/G|.$$
The claimed formula follows using
\begin{align*}
\chi(T\times G)&=\chi(T)\times \chi(G),\\
rk(\chi(T))&=rk(T)=\rho(Y).
\end{align*}
Under the additional assumptions we see that $T\times G$ is connected, $X$ is the spectrum of a UFD, and that stabilizers for the action of $T\times G$ on $q^{-1}(V)$ are trivial, whence the torsionfreeness of $Pic(Z)$.\\

Describing the chambers is possible by applying the functor $Pic^\bullet(\bullet)\otimes \Q$ (see Lemma \ref{L:picfunctor}) to the diagram
$$\xymatrix{(Y,*) & \ar[l]_q (U,T) \ar@{^(->}[r] & (X,T)  & \\
&&& (X,G\times T) \ar[lu]_{(id,pr_2)} \\
 (Z,*) && (q^{-1}(V),G\times T), \ar[ll]_{q'\circ q} \ar@{^(->}[ur] & }$$
to obtain
$$Pic(Y)_\Q\simeq \chi(T)_\Q\hookrightarrow \chi(G\times T)_\Q \twoheadrightarrow Pic(Z)_\Q.$$
By the proof of Theorem \ref{T:quotientmoridream} we know that Mori chambers of $Z$ are identified with GIT chambers in $\chi(T\times G)_\Q$, and similarly Mori chambers of $Y$ correspond to GIT chambers in $\chi(T)_\Q$. These GIT chamber structures are compatible by an application of the Hilbert-Mumford criterion (consider section \ref{SS:git}).
\end{proof}

\section{Examples}

We start with a very basic example, showing that Theorem 2.3 of \cite{hukeel} can be extended to the case of unstable components in codimension one.

\begin{example}
Let the group $G=\G_m\times \G_m$ act on the variety
$$X=\A^1\times \A^2$$
via $(g,h)*(x,y)=(gx,hy)$, where we denote by $x$ the coordinate of $\A^1$ and by $y_1,y_2$ the coordinates on $\A^2$. It is easy to verify that there is only one GIT chamber with stable locus
$$U=\left(\A^1\setminus 0\right)\times \left(\A^2\setminus 0\right),$$
and associated quotient
$$q:U\to \P^0\times \P^1\simeq \P^1.$$
The quotient is a Mori dream space despite the fact that the unstable locus  is of codimension one. However, the Picard number $\rho(\P^1)=1$ differs from the rank of the character group $rk(\chi(G))=2$, as is predicted by Theorem \ref{T:quotientmoridream}. Indeed, we have that
$$\Zcal(U)=\left\{0\times \A^2\right\}$$
consists of a single component. Furthermore the relation between the stable GIT chambers and the Mori chambers is correctly predicted as well (though both are trivial). To see this note
$$0\times \A^2=N(x),~(g,h)*x=g^{-1} x,$$
so if we identify $\varphi:\Z^2\simeq \chi(G)$ via
$$\varphi(a,b)=\left((g,h)\mapsto g^{-a} h^{-b}\right)$$
the character associated to $0\times \A^2$ is given as the vector $(-1,0)$ and the only GIT chamber is given as the cone spanned by the rays through the origin and $(-1,0)$ or $(0,-1)$ respectively.
\end{example}

In the first example we can easily replace $\A^2$ by any other prequotient.

\begin{example}
Suppose that a reductive group $G$ acts on $X=Spec(A)$, where $A$ is an AFD. Consider the action of the group $G'=G\times \G_m$ on
$$X'=X\times \A^1$$
given by $(g,h)*(x,y)=(g*x,h y)$. For the character lattice this corresponds to the addition of an orthogonal direction
$$\chi(G')\simeq \chi(G)\times \Z.$$
Since the action is diagonal, the Hilbert-Mumford criterion implies
$$\left(X'\right)^{\chi-sst}=X^{\chi_1-sst}\times \left(\A^1\right)^{\chi_2-sst},$$
where $\chi=(\chi_1,\chi_2)$ is the decomposition with respect to the above isomorphism. A similar statement holds for the stable loci. Hence the chamber structure for the action of $G$ on $X$ and for the action of $G'$ on $X'$ coincide in the sense that the chambers are stretched into the additional direction.\\
One can compute that for $U'$ equal to the semistable locus to some chamber in $\chi(G')_\Q$ the unstable codimension one components are given as
$$\Zcal(U')=\left\{Z\times \A^1~|~Z\in\Zcal(U)\right\}\cup\left\{X\times 0\right\},$$
where $U$ is the semistable locus corresponding to the chamber in $\chi(G)_\Q$. The action of $G'$ on $\Zcal(U')$ is induced by the action of $G$ on $\Zcal(U)$, where $X\times 0$ is fixed.\\
The associated characters for $Z\times \A^1$ lie in the hyperplane $\chi(G)_\Q$, and the character associated to $X\times 0$ points into the orthogonal direction. Thus, the stable chamber structure in $\chi(G')$ is induced by the stable chamber structure in $\chi(G)$, which agrees with the description of the quotients
$$Y'\simeq Y\times \P^0\simeq Y,$$
where $Y'$ is the quotient with respect to a chamber in $\chi(G')_\Q$, and $Y$ is the quotient with respect to the corresponding chamber in $\chi(G)_\Q$.
\end{example}

Our first nontrivial example is the Hirzebruch surface $\F_1$, which also allows a description as a quiver moduli.

\begin{example}\label{ex:hirzebruch}
Consider the vector bundle $E=\Ocal(0)\oplus\Ocal(-1)\to \P^1$, and the associated projectivization
$$\F_1=\P(\Ocal(0)\oplus \Ocal(-1))\to \P^1,$$
which is the first Hirzebruch surface $\F_1$. Denoting by $\pi:\A^2\setminus 0\to \P^1$ the canonical quotient map, we have the pullback $\pi^*E=L_0\oplus L_{-1}\to \A^2\setminus 0$, where as always $L\to \A^2\setminus 0$ is the trivial bundle, and $0$ and $-1$ correspond to characters of $\G_m$ via the isomorphism $\chi(\G_m)\simeq \Z$. Note that as a variety $\pi^*E\simeq \A^2\times (\A^2\setminus 0)$, so taking out the image of the zero section gives the variety
$$(\pi^*E)_0\simeq (\A^2\setminus 0)\times (\A^2\setminus 0).$$
There are two natural actions of $\G_m$ on $(\pi^*E)_0$. The first one is given by the description $\pi^*E=L_0\oplus L_{-1}$ and the second action is induced by the action of $\G_m$ on the fibres of $E$ (corresponding to the quotient $\pi':E_0\to \P(E)$). Since these two actions commute, we obtain an action of $G=\G_m\times \G_m$ on $(\pi^*E)_0$, which reads as
$$(\lambda,\mu)*((a,b),(c,d))=((\lambda a,\lambda b),(\mu c,\lambda \mu d)).$$
We claim that the composition $(\pi^*E)_0\to E_0 \to \P(E)=\F_1$ is a geometric quotient. Indeed, the construction of the involved bundles works locally over the open subsets where $E$ is trivial or the inverse images under $\pi$ thereof, and on these sets the claim holds.\\
We note that $(\pi^*E)_0\subset \A^4$ is the stable and semistable locus associated to the character $\chi(\lambda,\mu)=\lambda^2\mu$. Computing the GIT chambers for this action yields the following result, which is both accessible by an elementary calculation or by a quiver-like computation as in Example \ref{ex:quiver} (for the involved quiver see below).\\

\begin{center}
\begin{tabular}{|c|c|c|}
\hline 
 & stable locus & quotient \\ 
\hline 
\hline
I & $(a,b)\neq 0 \wedge(c\neq 0\vee d\neq 0)$ & $\F_1$ \\ 
\hline 
II & $c\neq 0 \wedge ((a,b)\neq 0\vee d\neq 0)$ & $\P^2$ \\ 
\hline 
\end{tabular} 
\end{center}~\\

For chamber II, the formula for the Picard number as in Theorem \ref{T:quotientmoridream} thus gives the correct result. We further remark that the only stable chamber with respect to chamber II is again chamber II, which agrees with the Mori chamber structure of $\P^2$.\\
Also, for chamber I, the theorem gives the correct result since there are no unstable codimension one components, and the GIT chamber structure agrees with the known Mori chamber structure of $\F_1$.\\

Note that the action of $\G_m\times \G_m$ on $\A^4$ as above admits a description as the action of $PG_d$ on $R_d(Q)$, where the quiver $Q$ is given as
$$\xymatrix{& A \ar@/_/[ld]_{\beta_2} \ar@/^/[ld]^{\beta_1} \ar[rd]^{\gamma} & \\
B \ar[rr]_{\alpha} && C,}$$
and the dimension vector as $d=(1,1,1)$ (compare with Example 3.6 in \cite{crawsmith}).
\end{example}

The following example gives nontrivial examples of the stable chamber structure. Note that the group $G_d$ associated to the occurring dimension vector is a torus, and that the following quotients also admit a description in the toric language.

\begin{example}\label{ex:quiver}
Let $Q$ denote the following quiver.
$$\xymatrix{0 \ar[rr]^a \ar[dr]_b && 1 \ar[ld]_c \ar[rd]^d & \\
& 2 \ar[rr]_e && 3}$$
In the following we will always consider the dimension vector $d=(1,1,1,1)$. If $M\in R_d(Q)$ is a representation we use the notation $M=(a,b,c,d,e)$, where $a\in k$ is the linear map associated to the arrow $a$ and so forth.\\

As explained in section \ref{S:quiver} the character group of $PG_d$ is identified with the hyperplane $(1,1,1,1)^\perp\subset \Z^4\simeq \chi(G_d)$. To facilitate the discussion we will always extend the scalars to $\R$, and work with coordinates on $\chi(PG_d)_\R$ given by the basis
$$\mathcal{B}:~b_1=(1,1,-1,-1),~b_2=(1,-1,1,-1),~ b_3=(1,-1,-1,1)$$
for this hyperplane. To further simplify the picture, we consider the intersection of the GIT cone in $\R^3$ with an affine hyperplane $S$, given in parameter form as
$$S:~x(s,t)=p+su_1+tu_2,$$
where $p=(1,1,0), u_1=(1,-1,0)$, and $u_2=(0,0,1)$. We can interpret this as the hyperplane of points which have value $1$ in direction $(1,1,0)$. A GIT chamber in $\chi(PG_d)$ is thus represented by a polytope in the plane $\R^2$ corresponding to the coordinates on $S$.\\

We need to translate between cones, or half spaces, in $\chi(PG_d)$, and half spaces in $S$. This is a standard linear algebra computation, which gives the following result.\\

Let $v\in(1,1,1,1)^\perp\subset \R^4$ denote a nonzero vector. Then the restriction of $v^\perp\subset \R^4$ to the slice $S$ in the coordinates as above is given as
$$x(t)=a+tb,$$
where
$$b=(v_1+v_4,v_3-v_2),$$
and $a$ is given by
$$a=\left(0,\frac{v_4-v_1}{v_1+v_4}\right),~\mathrm{or}~a=\left(\frac{v_1-v_4}{v_3-v_2},0\right).$$
If $v_2-v_3>0$, then the restriction of the half space $H(v)\subset \R^4$ with the slice $S$ is given by translating the line given above in the direction of the first coordinate. If $v_2-v_3<0$, the same holds with a change of direction.\\
For the translation in the direction of the second coordinate a similar assertion holds when we use the inequalities $v_1+v_4>0$, or $v_1+v_4<0$.\\

We are now ready to compute the intersections of the 14 potential walls with $S$. Instead of a complete computation we focus on two prototypical examples and state the full result in Appendix \ref{app:walls}.\\

First consider the dimension vector $e=(0,0,1,1)$. This dimension vector does always occur as the dimension type of a subrepresentation, and hence the associated hyperplane supports the $G$-ample cone. The condition for slope semistability (consider Theorem 3.8 in \cite{reineke}) reads as
\begin{align*}
\frac{\theta_2+\theta_3}{2}\leq \frac{\theta_0+\theta_1+\theta_2+\theta_3}{4}.
\end{align*}
Since this is equivalent to $0\leq \theta_0+\theta_1-\theta_2-\theta_3$, the associated half space in $\chi(PG_d)_\R$ is given as $H(1,1,-1,-1)$. A necessary condition for a character to be $G$-ample is thus that it is contained in the half space $H(1,1,-1,-1)$.\\

The dimension vector $e=(1,1,0,0)$ occurs as a subrepresentation of a representation $M=(a,b,c,d,e)\in R_d(Q)$ if and only if the maps starting in one of the points $0,1$ and ending in of the points $2,3$ are represented by 0. That is, if and only if $b=c=d=0$.\\
Here the condition for slope semistability reads as
$$\frac{\theta_0+\theta_1}{2}\leq \frac{\theta_0+\theta_1+\theta_2+\theta_3}{4},$$
which is equivalent to $0\leq -\theta_0-\theta_1+\theta_2+\theta_3$. The associated half space in $\chi(PG_d)_\R$ is thus given as $H(-1,-1,1,1)$. For a character in $H(-1,-1,1,1)$ this dimension vector does not impose any condition on the semistable locus. However, if the character is not contained in the half space, a necessary condition for a representation to be semistable is that this dimension vector does not occur, which is $b\neq 0$ or $c\neq 0$ or $d\neq 0$.\\

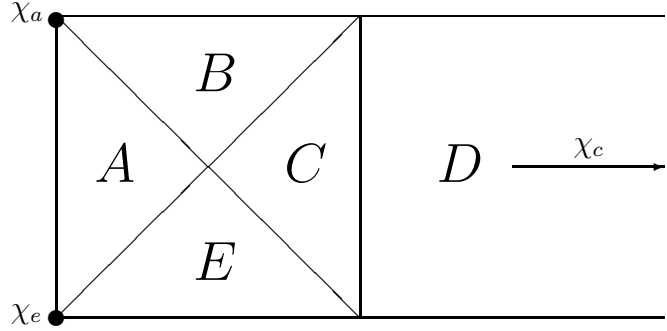
\begin{figure}[h]\label{fig:chambers}
\setlength{\unitlength}{2cm}
\begin{picture}(4,2.1)
\put(0,0){\line(1,1){1}}
\put(0,0){\line(1,0){4}}
\put(0,0){\line(0,1){2}}
\put(0,2){\line(1,-1){1}}
\put(0,2){\line(1,0){4}}
\put(2,2){\line(0,-1){2}}
\put(1,1){\line(1,1){1}}
\put(1,1){\line(1,-1){1}}
\put(0.25,0.9){\huge $A$}
\put(1.5,0.9){\huge $C$}
\put(2.5,0.9){\huge $D$}
\put(0.9,1.5){\huge $B$}
\put(0.9,0.25){\huge $E$}
\put(0,1.975){\circle*{0.1}}
\put(0,0){\circle*{0.1}}
\put(3,1){\vector(1,0){1}}
\put(-0.3,2){\large $\chi_a$}
\put(-0.3,0){\large $\chi_e$}
\put(3.4,1.1){\large $\chi_c$}
\end{picture}
\caption{These are the restrictions of the GIT chambers to the slice $S$, where the $x$- and $y$-axis correspond to coordinates on the slice and the $y$-axis is scaled with factor $\frac{1}{2}$. The origin is the point in which the chambers $A,B,C$ and $E$ meet. Additionally the directions associated to possible unstable codimension one components are given.}
\end{figure}

To compute which chambers are stable, we need to be able to translate directions in $\chi(PG_d)_\R$ to directions on the slice $S$. Hence suppose that we have a direction given as
$$c=c_p p+c_1 u_1+c_2 u_2\in\R^3,$$
where $c_p\geq 0$, and an arbitrary vector
$$x=p+su_1+tu_2\in S.$$
Then the translation of $x$ into the direction $c$ is given as
$$x+nc=(1+nc_p)p+(s+nc_1)u_1+(t+nc_2)u_2.$$
Obviously the ray spanned by $x+nd$ through the origin meets the slice $S$ in the point
$$p+\frac{s+nc_1}{1+nc_p}u_1+\frac{t+nc_2}{1+nc_p}u_2.$$
If $c_p\neq 0$, this converges to the point $(c_1,c_2)\in S$ in the chosen coordinates on $S$. But if $c_p=0$, the translation into the direction of $c$ corresponds to the translation into the direction of $(c_1,c_2)$ in the coordinates on $S$.\\

Now we need the characters associated to codimension one components of the unstable loci.\\
For the first coordinate function we have $g*a=\frac{g_0}{g_1}a$ (note that this is the contragredient action), which corresponds to the vector
$$\chi_a=(1,-1,0,0)\in \chi(G_d)_\R.$$
An elementary computation yields that this corresponds to a direction of the first type for $\chi_a=(-1,2)$ in coordinates on $S$.\\

With similar computation steps we obtain the directions $\chi_e=(-1,-2)$ of the first type, and $\chi_c=(1,0)$ of the second type.\\
This yields the following structure of stable chambers.\\

\begin{center}
\begin{tabular}{|c|c|c|}
\hline 
chamber & $\mathcal{Z}$ & stable chambers \\ 
\hline 
\hline
A & $N(a),N(e)$ & A \\ 
\hline 
B & $N(a)$ & A,B \\ 
\hline 
C & $\emptyset$ & all chambers \\ 
\hline 
D & $N(c)$ & D \\ 
\hline 
E & $N(e)$ & A,E \\ 
\hline 
\end{tabular}
\end{center}~\\

Finally, we compute some quotients associated to the chambers, showing that the Mori chamber structure of the quotients coincides with the stable GIT chamber structure.\\

For the chamber B, we claim that the associated quotient is the Hirzebruch surface $\F_1$. Indeed consider the maps
\begin{align*}
\varphi:R_d(Q)\to R_{d'}(Q'),~ & (a,b,c,d,e)\mapsto (e,b,ca,da), \\
\psi: R_{d'}(Q')\to R_d(Q),~ & (\alpha,\beta_1,\beta_2,\gamma)\mapsto (1,\beta_1,\beta_2,\gamma,\alpha),
\end{align*}
where the quiver $Q'$ and the dimension vector $d'$ are given as in Example \ref{ex:hirzebruch}. It is straightforward to check that $\varphi$ and $\psi$ respect the semistable loci of the chambers B and I respectively, and induce an isomorphism of the quotients.\\

Similarly the maps
\begin{align*}
\varphi(a,b,c,d,e)&=(eb,eca,da),\\
\psi(\alpha_1,\alpha_2,\alpha_3)&=(1,\alpha_1,\alpha_2,\alpha_3,1),
\end{align*}
where $Q'$ is given as
$$\xymatrix{A \ar[rr] \ar@/_/[rr] \ar@/^/[rr] && B,}$$
show that the quotient with respect to the chamber A is isomorphic to $\P^2$.\\

For these chambers the predictions of Theorem \ref{T:quotientmoridream} are thus confirmed. As a side remark we note that similar computations confirm the predictions for the chambers D and E, where the quotients are both nontrivial $\P^1$-bundles over $\P^1$.
\end{example}

We conclude with an example of a quotient of a Mori dream space.

\begin{example}
The group $G=SL_2$ acts on the Mori dream space $X=\left(\P^1\right)^6$ via the diagonalized canonical action on $\P^1$. Some quotients for this action are computed in \cite{polito}.\\
Following the notation given there, we consider quotients $X^s(m)/G$, where $m=(m_1,\ldots,m_6)$ indicates that we take the GIT quotient with respect to the bundle $\Ocal(m_1)\otimes\ldots\otimes \Ocal(m_6)$. Note that since the character group of $SL_2$ is trivial this bundle admits a unique $G$-linearization.\\
Corollary 5 in \cite{polito} shows that in the case when $|m|=m_1+\ldots+m_6$ is odd, the corresponding bundle lies in the interior of a GIT chamber, and we can use Theorem 3 of \cite{polito} to compute the unstable loci.\\

Three geometric quotients are explicitly given. They are well-known to be Mori dream spaces (even toric), and the formula of Theorem \ref{T:quotientofmds} correctly predicts the Picard number.
\begin{enumerate}
\item $X^s(2,3,3,1,1,1)/G\simeq Bl_{(1,1,1),(\infty,\infty,\infty)}\left(\P^1\right)^3$,\\
which has Picard number 5. Indeed, there is one unstable component of codimension one
$$Z=\left\{(x_1,\ldots,x_6)\in \left(\P^1\right)^6|x_2=x_3\right\}.$$
\item $X^s(2,2,2,1,1,1)/G\simeq Bl_{(0,0,0),(1,1,1),(\infty,\infty,\infty)}\left(\P^1\right)^3$,\\
which has Picard number 6. There are no unstable components of codimension one.
\item $X^s(1,1,2,1,1,1)/G\simeq Bl_{(0,0,0),(1,1,1),(\infty,\infty,\infty)}\left(\P^1\right)^3$,\\
so the quotient again has Picard number 6. In this case there are no unstable components of codimension one as well.
\end{enumerate}
\end{example}

\section{Appendix: Walls and chambers for Example \ref{ex:quiver}}\label{app:walls}

The 14 potential dimension vectors of nontrivial subrepresentations give the following 14 potential walls. We also state into which direction the associated half space extends.\\

\begin{center}
\begin{tabular}{|c|c|c|c|c|c|}
\hline 
 & $e$ & occurs iff & $a$ & $b$ & extends to  \\ 
 \hline
\hline 
I &  $(0,0,0,1)$ & (always) & $(0,2)$ & $(2,0)$ & neg. $y$-axis  \\ 
\hline 
II & $(0,0,1,0)$ & $e=0$ & $0$ & $(2,-4)$ & pos. $x$-axis  \\ 
\hline 
III & $(0,1,0,0)$ & $c=d=0$ & $0$ & $(2,4)$ & neg. $x$-axis  \\ 
\hline 
IV & $(1,0,0,0)$ & $a=b=0$ & $(0,-2)$ & $(-2,0)$ & neg. $y$-axis  \\ 
\hline 
V & $(1,1,0,0)$ & $b=c=d=0$ & $(-1,0)$ & $(0,2)$ & neg. $x$-axis  \\ 
\hline 
VI & $(1,0,1,0)$ & $e=a=0$ & $(1,0)$ & $(0,-2)$ & pos. $x$-axis  \\ 
\hline 
VII & $(1,0,0,1)$ & $a=b=0$ & $0$ & $(-2,0)$ & neg. $y$-axis  \\ 
\hline 
VIII & $(0,1,1,0)$ & $d=e=0$ & $0$ & $(2,0)$ & pos. $y$-axis \\ 
\hline 
IX & $(0,1,0,1)$ & $c=0$ & $(1,0)$ & $(0,2)$ & neg. $x$-axis \\ 
\hline 
X & $(0,0,1,1)$ & (always) & $(-1,0)$ & $(0,-2)$ & pos. $x$-axis \\ 
\hline 
XI & $(1,1,1,0)$ & $d=e=0$ & $(0,2)$ & $(2,0)$ & pos. $y$-axis \\ 
\hline 
XII & $(1,1,0,1)$ & $b=c=0$ & $0$ & $(-2,4)$ & neg. $x$-axis  \\ 
\hline 
XIII & $(1,0,1,1)$ & $a=0$ & $0$ & $(-2,-4)$ & pos. $x$-axis \\ 
\hline 
XIV & $(0,1,1,1)$ & (always) & $(0,-2)$ & $(2,0)$ & pos. $y$-axis  \\ 
\hline 
\end{tabular} 
\end{center}~\\

The stability conditions for the chambers, as given in Figure 1, read as follows.

\begin{align*}
(R_d(Q))^{A-sst}&=\left\{a\neq 0 \wedge e\neq 0 \wedge (b\neq0 \vee c\neq 0 \vee d \neq 0)\right\} \\
(R_d(Q))^{B-sst}&=\left\{a\neq 0 \wedge (b\neq 0 \vee c\neq 0) \wedge (d\neq 0 \vee e\neq 0)\right\} \\
(R_d(Q))^{C-sst}&=\left\{(a\neq 0\vee b\neq 0)\wedge(b\neq 0\vee c\neq 0)\wedge(c\neq 0\vee d\neq 0)\right.\\
&~~~~\left.\wedge(d\neq 0\vee e\neq 0)\wedge(e\neq 0\vee a\neq 0) \right\} \\
(R_d(Q))^{D-sst}&=\left\{c\neq 0 \wedge (a\neq 0 \vee b\neq 0) \wedge (d\neq 0 \vee e\neq 0) \right\} \\
(R_d(Q))^{E-sst}&=\left\{e\neq 0 \wedge (a\neq 0 \vee b\neq 0) \wedge (c\neq 0 \vee d\neq 0) \right\}.
\end{align*}

\end{document}